\newtheorem{thm}{Theorem}[section]
\newtheorem{cor}[thm]{Corollary}
\newtheorem{lem}[thm]{Lemma}
\newtheorem{prop}[thm]{Proposition}
\theoremstyle{definition}
\newtheorem{defn}[thm]{Definition}
\theoremstyle{remark}
\newtheorem{rem}[thm]{Remark}
\numberwithin{equation}{section}
\DeclareMathOperator{\Der}{Der}
\DeclareMathOperator{\Ann}{Ann}
\DeclareMathOperator{\gr}{gr}
\DeclareMathOperator{\Center}{Center}
\begin{document}

\title[Solvable Leibniz
algebras with naturally graded non-Lie $p$-filiform nilradicals]{Solvable Leibniz algebras with naturally graded non-Lie $p$-filiform nilradicals and maximal complemented space of its nilradical}
\author{J.~Q.~Adashev\textsuperscript{1}, L.M.~Camacho\textsuperscript{2}, B.~A.~Omirov\textsuperscript{1,3}}

\address{\textsuperscript{1}Institute of Mathematics, Uzbekistan  Academy of  Sciences, 100170, Tashkent,  Uzbekistan, adashevjq@mail.ru}
\address{\textsuperscript{2}Dpto. Matem\'{a}tica Aplicada I. Universidad de Sevilla. Avda. Reina Mercedes, s/n. 41012
Sevilla. (Spain) E-mail address: lcamacho@us.es}
\address{\textsuperscript{3} National University of Uzbekistan, 4, University str., 100174, Tashkent,  Uzbekistan, omirovb@mail.ru}

\begin{abstract} The present article is a part of the study of solvable Leibniz algebras with a given nilradical. In this paper solvable Leibniz algebras, whose nilradicals is naturally graded $p$-filiform non-Lie Leibniz algebra  $(n-p\geq4)$ and the complemented space to nilradical has maximal dimension, are described up to isomorphism. Moreover, among obtained algebras we indicate the rigid and complete algebras.
\end{abstract}

\subjclass[2010] {17A32, 17A36, 17B30, 17B56.}

\keywords{Leibniz algebra, natural gradation, $p$-filiform algebra,
solvability, nilradical, derivation, the second group of cohomology.}

\maketitle

\section{Introduction}

During the last decades the theory of Leibniz algebras has been actively investigated and many results of the Lie Theory have been transferred to Leibniz algebras.

Levi's decomposition asserts that every finite-dimensional Lie algebra is a semidirect sum of a semisimple Lie subalgebra and solvable radical \cite{jacobson}, while semisimple Lie algebras over the field of complex numbers have been classified by E. Cartan \cite{cartan} and over the field of real numbers by F. Gantmacher \cite{gant}. Thus, the problem description of finite-dimensional Lie algebras is reduced to the study of solvable Lie algebras. Till present the classification of solvable Lie algebras is known for dimensions not greater than six \cite{graaf}, \cite{pat}. Also there are several works devoted to the classification of solvable Lie algebras in an arbitrary finite-dimensions \cite{AnCaGa1, AnCaGa1x, AnCaGa2}, \cite{Mal,SnWi, TrWi}. In fact, there are solvable Lie algebras constructed using the method explained in \cite{Mub}.

For finite-dimensional Leibniz algebras over a field of zero characteristic, there is an analogue of Levi's decomposition: any Leibniz algebra is decomposed into a semidirect sum of a semisimple Lie algebra and its solvable radical \cite{Bar}. Therefore, similar to  Lie case, the main problem of the study of Leibniz algebras reduced solvable ones.

In the paper \cite{Nulfilrad}, the method that describes solvable Lie algebras by means of its radical is adapted for Leibniz case.

Since the description of finite-dimensional solvable Leibniz algebras is a boundless problem (even for solvable Lie algebras), new approaches are developing. Relevant tools of geometric approaches are properties of Zariski topology and the natural action of linear reductive group on varieties of algebras in a such way that orbits under the action consists of isomorphic algebras. It is a well-known result of algebraic geometry that any algebraic variety (evidently, algebras defined via identities form an algebraic variety) is a union of a finite number of irreducible components. The most important algebras are those whose orbits under the action are open sets in sense of Zariski topology (such algebra are called {\it rigid} algebras). The algebras of a variety with open orbits are important since the closures of orbits of such algebras form irreducible components of the variety. At the same time there exists an irreducible component which is not the closure of orbit of any algebra. This fact does not detract the importance of algebras with open orbits.

This was a motivation for many works focused to discovering of algebras with open orbits and to description of sufficient properties of such algebras \cite{Burde,O'Halloran1,O'Halloran2}.

The aim of this article is to describe solvable Leibniz algebras with naturally graded non-Lie $p$-filiform nilradicals and with maximal dimension of complemented space of its nilradical. Namely, in arbitrary finite dimension, we got three types of such algebras  ($R(\mu_1,k), \ R(\mu_2,k)$ and $R(\mu_3,k+2)$) and we established that the algebra $R(\mu_3,k+2)$ is complete and cohomologically rigid.

Throughout the paper we shall consider finite-dimensional vector spaces and complex algebras. Moreover, in the multiplication table the omitted products are assumed to be zero and we shall consider non-nilpotent solvable algebras (unless stated otherwise).

\section{Preliminaries}

We recall the necessary background in order to make the comprehensive paper.

\begin{defn}\cite{Loday} A  \textit{Leibniz algebra}  $L$ is a vector space over $\mathbb{F}$ equipped with a bilinear map (multiplication) satisfying the Leibniz identity
\[ \big[x,[y,z]\big]=\big[[x,y],z\big] - \big[[x,z],y\big] \]
for all $x,y,z\in L.$
\end{defn}

We refer readers to works \cite{Loday} and \cite{Loday1} for examples of Leibniz algebras.

Further we will use the following notation
\[ {\mathcal L}(x, y, z)=[x,[y,z]] - [[x,y],z] + [[x,z],y].\]

It is obvious that the identity ${\mathcal L}(x, y, z)=0$ determines the Leibniz algebras.

For a given Leibniz algebra $L$ we can define the following two-sided ideals
$$\Ann_r(L) =\{x \in L \mid [y,x] = 0,\ \text{for \ all}\ y \in L \},$$
$$\Center(L) =\{x \in L \mid [x,y]=[y,x] = 0,\ \text{for \ all}\ y \in L \}$$
called the \emph{right annihilator} and the \emph{center} of $L$, respectively. From the Leibniz identity we conclude that the following elements $[x,x], [x,y]+[y,x]$ in $\Ann_r(L)$ for any $x, y \in L$.

\label{der}  A linear map $d: L\rightarrow L$ of a Leibniz algebra $L$ is said to be a {\it derivation} if
for all $x, y\in L,$ the following condition holds:
\begin{equation}\label{eq0}
d([x,y])=[d(x),y] + [x, d(y)].
\end{equation}
The set of all derivations of $L$ (denoted by $\Der(L)$) forms a Lie algebra with respect to the commutator.

Note that the operator of right multiplication on elements $x\in L$ (further denoted by $\mathcal{R}_x$) is a derivation, which is called {\it inner derivation}.

\begin{defn} A Leibniz algebra $L$ is called \textit{complete} if $\Center(L)=0$ and all derivations of $L$ are
inner. \end{defn}

Analogously to Lie algebras, we define the following sequences:
\[L^1=L, \ L^{k+1}=[L^k,L],  \ k \geq 1, \qquad \qquad
L^{[1]}=L, \ L^{[s+1]}=[L^{[s]},L^{[s]}], \ s \geq 1,\]
so-called the  \emph{lower central} and the \emph{derived series} of $L$, respectively.

\begin{defn} A Leibniz algebra $L$ is
\emph{nilpotent} (respectively, \emph{solvable}), if there exists $n\in\mathbb N$ ($m\in\mathbb N$) such that $L^{n}=0$ (respectively, $L^{[m]}=0$).
\end{defn}

The maximal nilpotent ideal of a Leibniz algebra is said to be the \emph{nilradical} of the  algebra.

An analogue of Mubarakzjanov's methods has been applied for solvable Leibniz algebras which shows the importance of the consideration of non-characteristically nilpotent Leibniz algebra \cite{Nulfilrad}.

Consider a solvable Leibniz algebra $R=N\oplus Q$  with the nilradical $N$ and complementary vector space $Q$ of $N$ with a basis $\{x_1, \dots, x_m\}$. It is known that for an element $x \in Q$ the operator $\mathcal{R}_{{x |}_{N}}$ is a non-nilpotent derivation of $N$. Moreover, for any scalars $\{\alpha_1, \dots, \alpha_m\}\in \mathbb{C}\setminus\{0\}$, the operator $\alpha_1\mathcal{R}_{{x_1 |}_{N}}+\dots+\alpha_m\mathcal{R}_{{x_m|}_{N}}$ is non-nilpotent,
which means that the elements $\{x_1, \dots, x_m\}$ are {\it nil-independent}. Therefore, the dimension of complementary vector space to $N$ is no greater than the maximal number of nil-independent derivations of $N$ (\cite[Theorem 3.2]{Nulfilrad}).

For a nilpotent Leibniz algebra $L$ and $x\in L\setminus L^2$ we consider the decreasing sequence $C(x)=(n_1,n_2,
\dots,n_k)$ as the dimensions of the Jordan blocks of the operator $\mathcal{R}_x$. On the set of such sequences we consider lexicographic order.

\begin{defn}  The sequence $C(L)=\max\limits_{x\in L \setminus L^2}C(x)$
is called the \emph{characteristic sequence} of the Leibniz algebra $L$.
\end{defn}

Similar to the Lie algebras, we have the following definition.

\begin{defn} A Leibniz algebra $L$ is called $p$-\emph{filiform}  if
$C(L)=(n-p,\underbrace{1,\dots,1}_{p})$, where $p\geq 0$.
\end{defn}

Note that above definition, when $p>0$ agrees with the definition of $p$-filiform Lie algebras \cite{Gomez}.
Since in the case of Lie algebras there is no singly-generated algebra, the notion of $0$-filiform algebra for Lie algebras has no sense,
 while for the Leibniz algebras case in each dimension there exists up to isomorphism a unique null-filiform algebra \cite{Omir1}.

\begin{defn}  Given an $n$-dimensional $p$-filiform Leibniz algebra $L$, put $L_i=L^i/L^{i+1},\ 1\leq i\leq n-p$, and $\gr L =L_1 \oplus L_2 \oplus \cdots \oplus L_{n-p}$.
 Then $[L_i,L_j]\subseteq L_{i+j}$ and we obtain the graded algebra $\gr L$. If $\gr L$ and $L$ are isomorphic, $\gr L\cong L$, we say that $L$ is \emph{naturally graded}.
\end{defn}

In this paper, we consider  naturally graded $p$-filiform non-Lie Leibniz algebras. Their classification is given in the next theorem.

\begin{thm}\label{p-filiform} \cite{p-filLeibniz}  An arbitrary $n$-dimensional naturally graded non-split non-Lie $p$-filiform Leibniz algebra $(n-p\geq 4)$ is isomorphic to one of the following non-isomorphic algebras:

$p=2k$ is even
$$\begin{array}{ll}
\mu_1:\left\{\begin{array}{ll}
[e_i,e_1]=e_{i+1}, & 1\leq i\leq n-2k-1,\\[1mm]
[e_1, f_j] =f_{k+j}, & 1\leq j\leq k,\\[1mm]
 \end{array}\right.&

\mu_2:\left\{\begin{array}{ll}
[e_i,e_1]=e_{i+1}, & 1\leq i\leq n-2k-1,\\[1mm]
[e_1,f_1]=e_{2}+f_{k+1}, &\\[1mm]
[e_i,f_1]=e_{i+1}, & 2\leq i\leq n-2k-1,\\[1mm]
[e_1, f_j] =f_{k+j}, & 2\leq j\leq k,\\[1mm]
 \end{array}\right.
 \end{array}$$

$p=2k+1$ is odd

\[\mu_3:\left\{\begin{array}{ll}
[e_i,e_1]=e_{i+1}, & 1\leq i\leq n-2k-2,\\[1mm]
[e_1,f_j]=f_{k+1+j}, & 1\leq j\leq k,\\[1mm]
[e_i, f_{k+1}] =e_{i+1}, & 1\leq i\leq n-2k-2,\\[1mm]
 \end{array}\right.\]
where $\{e_1,e_2,\dots,e_{n-p},f_1,f_2,\dots,f_{p}\}$ is a basis
of the algebra.
\end{thm}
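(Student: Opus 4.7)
My plan exploits the natural gradation $L=L_1\oplus\cdots\oplus L_{n-p}$ together with the $p$-filiform structure to pin down a normal form with only a handful of parameters, which are then reduced via homogeneous basis changes. First, I would pick $e_1\in L\setminus L^2$ realizing the characteristic sequence $(n-p,1,\ldots,1)$, rename the cyclic-chain vectors so that $e_{i+1}=[e_i,e_1]$ for $1\leq i\leq n-p-1$, and complete to a basis $\{e_1,\ldots,e_{n-p},f_1,\ldots,f_p\}$ in which each $f_j$ is homogeneous for the gradation. Since $\dim L_i=1$ for $i\geq 3$ (the $e$-chain alone populates those grades), every $f_j$ must sit in $L_1$ or $L_2$.

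I would next use $[L_i,L_j]\subseteq L_{i+j}$ to cut down the possible nonzero products. Beyond the chain relations $[e_i,e_1]=e_{i+1}$, the remaining data consists of: products $[e_1,f_j]$ with $f_j\in L_1$ landing in $L_2=\langle e_2\rangle\oplus(\text{some }f_\ell)$; a bilinear pairing $[f_i,f_j]$ with both factors in $L_1$, also landing in $L_2$; and products $[e_i,f_\ell]$ with $f_\ell\in L_2$ landing in $L_{i+2}=\langle e_{i+2}\rangle$. Imposing the Leibniz identity $\mathcal{L}(x,y,z)=0$ on triples of basis vectors turns this into a tractable system of polynomial constraints on the structure constants, which after some linear algebra reduces to a linear map $\phi$ from the grade-$1$ $f$'s to the grade-$2$ $f$'s (encoding the left action of $e_1$) together with a bilinear form $B$ on the grade-$1$ part valued in $\langle e_2\rangle$.

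The parity split is then forced by the canonical forms of $\phi$ and $B$. For $p=2k$, $\phi$ can be normalized to the isomorphism $f_j\mapsto f_{k+j}$ by a change of basis within the $f$-block; afterwards, whether the diagonal contribution $[e_1,f_1]\ni e_2$ can be absorbed into a redefinition of the chain gives the split into $\mu_1$ (absorbable) and $\mu_2$ (not absorbable, forcing $[e_i,f_1]=e_{i+1}$ along the whole chain via Leibniz compatibility). For $p=2k+1$, the odd dimension forces $\ker\phi$ to be one-dimensional, and the distinguished generator $f_{k+1}$ in that kernel must act on the $e$-chain via $[e_i,f_{k+1}]=e_{i+1}$; otherwise the algebra decomposes as a direct sum, contradicting the non-split hypothesis. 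This yields the unique model $\mu_3$.

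I expect the main obstacle to be the joint normalization of $\phi$, $B$, and the chain-extending action $[e_i,f_\ell]$ while respecting the homogeneous basis changes and the non-split hypothesis; in particular, showing that $\mu_1$ and $\mu_2$ are genuinely non-isomorphic likely requires a discrete invariant such as the codimension of $[L,L_1]$ inside $L_2$, or a direct comparison of right annihilators. The non-Lie condition, which requires some $[x,x]$ or $[x,y]+[y,x]$ to be nonzero, must also be verified against each candidate so that no Lie algebras slip into the classification and no non-Lie model is overlooked.
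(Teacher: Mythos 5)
First, a point of order: Theorem~\ref{p-filiform} is not proved in this paper at all --- it is imported verbatim from \cite{p-filLeibniz} --- so there is no internal proof to compare your argument against. Judged on its own terms, your outline does resemble the general strategy of that reference (fix the chain $e_{i+1}=[e_i,e_1]$, work degree by degree in the natural gradation, normalize the remaining structure constants by homogeneous changes of basis), but it has a genuine gap at its very first structural step.

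You assert that $\dim L_i=1$ for $i\geq 3$, ``the $e$-chain alone populates those grades,'' and hence that every $f_j$ lies in $L_1$ or $L_2$. This is not a consequence of the definitions: natural gradedness together with the characteristic sequence $(n-p,1,\dots,1)$ only guarantees $\dim L_i\geq 1$ for $i\leq n-p$, and a priori the $p$ extra generators may be distributed among many graded components (this genuinely happens for naturally graded $p$-filiform \emph{Lie} algebras, cf.\ \cite{Gomez}, which is precisely why their classification is much longer). Proving that for non-split non-Lie algebras with $n-p\geq 4$ the only admissible distributions are $\dim L_1=\dim L_2=k+1$ when $p=2k$ and $\dim L_1=k+2$, $\dim L_2=k+1$ when $p=2k+1$ is the combinatorial heart of the theorem and is exactly what makes the answer depend only on the parity of $p$; it requires the Leibniz identity, the non-Lie hypothesis (which forces $[e_1,e_1]=e_2\neq 0$, so the chain is singly generated) and the non-split hypothesis, none of which enter your argument at that stage. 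Relatedly, you take for granted that the map $\phi\colon f\mapsto[e_1,f]$ on the degree-one $f$'s has full rank in the even case and corank one in the odd case; without first pinning down $\dim L_2$ this is circular, since the rank of $\phi$ is what determines how many $f$'s can sit in degree two at all. The remainder of your plan (normalizing $\phi$, deciding whether the $e_2$-component of $[e_1,f_1]$ can be absorbed to separate $\mu_1$ from $\mu_2$, distinguishing the models by ranks of right multiplications) is sound in outline, but you also leave the products $[f_i,f_j]$ and $[f_j,e_i]$ essentially untreated; showing these vanish or can be removed is part of the work, not a corollary of listing them among ``tractable polynomial constraints.''
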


In order to simplify our next  calculations, the following change of basis in $\mu_3$:
\[e_1^\prime=f_{k+1}, \quad e_2^\prime=e_1-f_{k+1}, \quad e_{i+1}^\prime=e_{i}, \  2\leq i\leq n-2k-1,\quad f_{j}^\prime=f_{j}, \quad f_{k+j}^\prime=f_{k+1+j}, \ 1\leq j\leq k,
\]
allows to obtain a more convenient form of $\mu_3$:
\[ \mu_3:\left\{\begin{array}{ll}
[e_i,e_1]=e_{i+1}, & 2\leq i\leq n-2k-1,\\[1mm]
[e_2,f_j]=f_{k+j}, & 1\leq j\leq k.\\[1mm]
 \end{array}\right.\]

\subsection{Cohomology Leibniz algebras}
Since in the last section of this paper we study the cohomological rigidity of obtained algebras, we need some concepts of the second cohomology group of Leibniz algebras. For more details, we refer to  \cite{Loday}, \cite{Loday1} and references therein. The \emph{second cohomology group} of a Leibniz algebra $L$ with coefficient itself is the quotient space
$$HL^2(L,L):=ZL^2(L,L)/BL^2(L,L),$$
where the elements $\psi\in BL^2(L,L)$ and $\varphi \in ZL^2(L,L)$ are defined by:
$$\psi(x,y)=[d(x),y] + [x,d(y)] - d([x,y]), \,\, \mbox{for some linear map} \,\, d\in Hom(L,L),$$
\begin{equation}\label{eq4}
[x,\varphi(y,z)] - [\varphi(x,y), z] +[\varphi(x,z), y] +\varphi(x,[y,z]) - \varphi([x,y],z)+\varphi([x,z],y)=0,
\end{equation}
respectively.

It is obvious that a Leibniz $2$-cocycle $\varphi$ of a Leibniz algebra $L$ is  determined by the identity
$\Phi(\varphi)(x, y, z)=0,$ where
$$\Phi(\varphi)(x, y, z)=[x,\varphi(y,z)] - [\varphi(x,y), z] +[\varphi(x,z), y] +\varphi(x,[y,z]) - \varphi([x,y],z)+\varphi([x,z],y).$$

\begin{defn} A Leibniz algebra $L$ is called cohomologically rigid if $HL^2(L,L)=0.$
\end{defn}

Due to results of the paper \cite{Omir1}, we have that a Leibniz algebra is rigid if the second cohomology group with coefficients in itself is trivial.

\section{Solvable Leibniz algebras with abelian nilradical and maximal dimension of complemented space $Q.$}

In this section we recall some results of the paper \cite{Adashev}, which will be used below.

We denote by $\mathbf{a_k}$ the $k$-dimensional abelian algebras
and by $R(\mathbf{a_k},s)$ the solvable Leibniz algebra with $\mathbf{a_k}$ as nilradical and $s$ as the dimension of complemented space to $\mathbf{a_k}.$

\begin{thm} \cite{Adashev} \label{abelian} The maximal possible dimension of algebras of the family $R(\mathbf{a_k}, s)$ is equal to $2k$, that is, $s=k$.
Moreover, an arbitrary algebra of the family $R(\mathbf{a_k},
k)$ is decomposed into a direct sum of copies of two-dimensional
non-trivial solvable Leibniz algebras.
\end{thm}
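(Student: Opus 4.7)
The plan is to establish the bound $s\leq k$ by analyzing $\Der(\mathbf{a}_k)$ and then to exploit maximality $s=k$ to simultaneously diagonalize the action of $Q$ on $N$ and decompose $R$ into two-dimensional summands by successive changes of basis.

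For the upper bound, since $\mathbf{a}_k$ is abelian, every linear endomorphism is a derivation, so $\Der(\mathbf{a}_k)=\mathfrak{gl}_k(\mathbb{C})$. The discussion preceding the theorem shows that the operators $T_i:=\mathcal{R}_{x_i}|_N$ are nil-independent; moreover, since $[x_i,x_j]\in R^2\subseteq N$ and $N$ is abelian, a direct expansion using the Leibniz identity yields $[T_i,T_j]=-\mathcal{R}_{[x_i,x_j]}|_N=0$, so the $T_i$ pairwise commute. Decomposing each $T_i=S_i+N_i$ in its Jordan--Chevalley form, the $S_i$ also commute, and $\sum\alpha_iT_i$ is nilpotent if and only if $\sum\alpha_iS_i=0$; thus nil-independence of $\{T_i\}$ is equivalent to linear independence of the semisimple parts $\{S_i\}$. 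A commuting family of semisimple operators on $\mathbb{C}^k$ is simultaneously diagonalizable and lives in a torus of dimension $k$, giving $s\leq k$.

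When $s=k$, choose a basis $\{e_1,\dots,e_k\}$ of $N$ that simultaneously diagonalizes the $S_i$, and write $S_i=\mathrm{diag}(\sigma_{i1},\dots,\sigma_{ik})$. Linear independence of the $S_i$ with $s=k$ forces the matrix $\Sigma=(\sigma_{ij})$ to be invertible; its columns are therefore distinct, each joint weight space is one-dimensional, and the commuting nilpotent parts $N_i$ (which preserve each weight space) must vanish. Hence $[e_j,x_i]=\sigma_{ij}e_j$, and replacing $x_i$ by $\sum_l(\Sigma^{-1})_{il}x_l$ normalizes this to $[e_j,x_i]=\delta_{ij}e_j$.

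The remaining products are then analyzed via the Leibniz identity together with the relations $[x,y]+[y,x]\in\Ann_r(R)$ and $[x,x]\in\Ann_r(R)$. Evaluating $\mathcal{L}(x_m,x_i,e_j)=0$ and using that $\mathcal{R}_{x_m}|_N$ is diagonal forces $[x_i,e_j]=b_{ij}e_j$; the annihilator condition applied to $[e_j,x_j]+[x_j,e_j]=(1+b_{jj})e_j$ then forces, for each index $j$, either $e_j\in\Ann_r(R)$ together with $b_{\cdot j}=0$, or $b_{ij}=-\delta_{ij}$. A parallel analysis of $\mathcal{L}(x_i,x_j,x_l)=0$ constrains $[x_i,x_j]$ to $\mathrm{span}(e_i,e_j)$, and a final basis change $x_i\mapsto x_i+\sum_l\mu_{il}e_l$ (which preserves both $[e_j,x_i]=\delta_{ij}e_j$ and $[x_i,e_j]=b_{ij}e_j$) eliminates all remaining cross-terms, leaving $[x_i,x_j]=0$. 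One concludes $R=\bigoplus_{i=1}^{k}\langle e_i,x_i\rangle$, each summand being either the two-dimensional non-abelian solvable Lie algebra (when $b_{ii}=-1$) or its non-Lie Leibniz analogue (when $b_{ii}=0$). The principal technical obstacle is the coordinated bookkeeping in this last step: one must simultaneously kill all off-diagonal brackets $[x_i,x_j]$ while preserving the normalizations already achieved, and the case split between the Lie and non-Lie situations for each index $j$ must be handled uniformly.
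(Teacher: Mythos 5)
The paper states this theorem only as a citation from \cite{Adashev} and contains no proof of it, so there is no internal argument to compare yours against; judged on its own terms, your proposal is correct and follows what is essentially the natural (and surely the intended) route. The upper bound is sound: $[\mathcal{R}_{x_i}|_N,\mathcal{R}_{x_j}|_N]=-\mathcal{R}_{[x_i,x_j]}|_N=0$ because $[x_i,x_j]$ lies in the nilpotent ideal $R^2\subseteq N=\mathbf{a_k}$ and $\mathbf{a_k}$ is abelian, and the reduction of nil-independence to linear independence of the commuting semisimple parts via Jordan--Chevalley correctly gives $s\leq k$, with the weight-space argument then forcing the nilpotent parts to vanish when $s=k$.

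The two delicate points you flag at the end do work out, and it is worth recording why. Writing $[x_i,x_j]=d_{iji}e_i+d_{ijj}e_j$ (your Leibniz-identity computation correctly rules out other components), the substitution $x_i\mapsto x_i+\sum_l\mu_{il}e_l$ changes the $e_i$-component of $[x_i,x_i]$ by $\mu_{ii}(1+b_{ii})$, which is useless when $b_{ii}=-1$; but in that case $e_i\notin\Ann_r(R)$, and $[x_i,x_i]\in\Ann_r(R)$ already forces that component to vanish. For $i\neq j$ the single parameter $\mu_{ji}$ must simultaneously kill the $e_i$-component of $[x_j,x_i]$ (where it enters with coefficient $1$) and of $[x_i,x_j]$ (where it enters with coefficient $b_{ii}$); the identity $\mathcal{L}(x_i,x_j,x_i)=0$ yields exactly the needed compatibility $d_{iji}=b_{ii}\,d_{jii}$, so $\mu_{ji}=-d_{jii}$ kills both at once. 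With these checks supplied, $R$ splits as the direct sum of the ideals $\langle e_i,x_i\rangle$, each being one of the two non-trivial two-dimensional solvable Leibniz algebras (the paper's $\mathcal{L}(\gamma_i)$ with $\gamma_i\in\{-1,0\}$). The only cosmetic omission is that you never note that $s=k$ is actually attained (e.g., by the direct sum of $k$ copies of the two-dimensional non-split solvable Leibniz algebra), which is needed to justify the word ``maximal'' in the statement.
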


Consider the solvable Leibniz algebras ${\mathcal L(\gamma_i)}$  with nilradical $\mathbf{a_k}$ under the
condition that the complemented space to  the nilradical have maximal dimension. Then there exists a basis $\{f_1, f_2, \dots,
f_k, x_1, x_2, \dots, x_{k}\}$  of ${\mathcal L(\gamma_i)}$ such that the
multiplication table has the form:
\[{\mathcal L(\gamma_i)}:\quad [f_i,x_i] = f_i, \qquad [x_i,f_i]=\gamma_if_i, \quad  1\leq i \leq k,\]
where $\gamma_i\in\{-1,0\}.$

The algebra ${\mathcal L(\gamma_i)}$ is
a rigid algebra for any  $\gamma_i\in\{-1,0\}, \ 1\leq i \leq k$, \cite{Adashev}.

\begin{lem} \label{lemm1} Any automorphism $\varphi$ of the algebra ${\mathcal L(\gamma_i)}$ has the following form:
\[\varphi(f_i)=\alpha_if_i,\ \ \varphi(x_i)=\beta_if_i+x_i, \quad 1\leq i\leq k,\]
where $(1+\gamma_i)\beta_i=0$ for $1\leq i\leq k.$

\end{lem}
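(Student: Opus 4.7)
The plan is to expand an arbitrary automorphism $\varphi$ in the basis $\{f_1,\dots,f_k,x_1,\dots,x_k\}$ and pin down its coefficients by imposing preservation of each defining bracket. Since the nilradical $N=\langle f_1,\dots,f_k\rangle$ is the unique maximal nilpotent ideal, it is $\varphi$-invariant, so one writes
\[\varphi(f_i)=\sum_{j=1}^k a_{ij}f_j,\qquad \varphi(x_i)=\sum_{j=1}^k b_{ij}f_j+\sum_{j=1}^k c_{ij}x_j.\]

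The first step is to apply $\varphi$ to the identities $[f_i,x_l]=\delta_{il}f_i$. A direct expansion using $[f_j,x_l]=\delta_{jl}f_j$ gives $[\varphi(f_i),\varphi(x_l)]=\sum_j a_{ij}c_{lj}f_j$, and matching with $\delta_{il}\sum_j a_{ij}f_j$ yields $a_{ij}c_{lj}=\delta_{il}a_{ij}$. This forces each column of $A=(a_{ij})$ to contain at most one nonzero entry, and simultaneously $c_{ij}=1$ wherever $a_{ij}\neq 0$ while $c_{lj}=0$ in any other row of the same column. Invertibility of $\varphi|_N$ then turns $A$ into a monomial matrix; after re-indexing basis vectors inside each block where the $\gamma_i$'s coincide, one may assume $A$ is diagonal, obtaining $\varphi(f_i)=\alpha_i f_i$ and $\varphi(x_i)=\sum_j b_{ij}f_j+x_i$.

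The second step exploits $[x_i,x_l]=0$. Using $[x_i,f_s]=\delta_{si}\gamma_i f_i$ together with the forms just obtained, for $i\neq l$ one computes
\[[\varphi(x_i),\varphi(x_l)]=b_{il}f_l+b_{li}\gamma_if_i=0,\]
so $b_{il}=0$ for all $i\neq l$. The case $i=l$ collapses to $(1+\gamma_i)b_{ii}f_i=0$, yielding the single relation $(1+\gamma_i)b_{ii}=0$. Setting $\beta_i:=b_{ii}$ gives the claimed form, and the identity $\varphi([x_i,f_i])=\gamma_i\varphi(f_i)$ is then automatic, closing the argument.

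The main obstacle is justifying the diagonality of $A$ in the first step: a priori the monomial structure only forces $\varphi(f_i)=\alpha_i f_{\sigma(i)}$ for some permutation $\sigma$ satisfying $\gamma_{\sigma(i)}=\gamma_i$, and the strict diagonal normal form is achieved only after composing with an index permutation inside each $\gamma$-isotypic block. All the remaining computations are routine bookkeeping in the multiplication table.
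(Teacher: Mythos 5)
Your proposal is correct and follows essentially the same route as the paper: expand $\varphi$ in the basis, use preservation of $[f_i,x_j]=\delta_{ij}f_i$ to force the $x$-part of $\varphi(x_i)$ to be a permutation matrix and $\varphi|_N$ to be monomial, normalize to the diagonal case, and then extract $b_{il}=0$ ($i\neq l$) and $(1+\gamma_i)\beta_i=0$ from $[x_i,x_l]=0$. Your closing remark that strict diagonality only holds after composing with a permutation preserving the $\gamma$-isotypic blocks is the same ``without loss of generality'' step the paper takes (via a determinant argument on the columns of $D$), and you state the needed compatibility condition $\gamma_{\sigma(i)}=\gamma_i$ more explicitly than the paper does.
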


\begin{proof} Let $\varphi$ be an automorphism of ${\mathcal L(\gamma_i)}.$ Since the automorphism of algebra maps nilradical to  nilradical  we can assume
$$\varphi(f_i)=\sum\limits_{j=1}^{k}D_{i,j}f_j, \quad \varphi(x_i)=\sum\limits_{j=1}^{k}F_{i,j}f_j+\sum\limits_{j=1}^{k}H_{i,j}x_j,\quad 1\leq i\leq k.$$

From the following equalities  $[\varphi(f_i),\varphi(x_j)]=\varphi([f_i,x_j])$ and $[\varphi(x_i),\varphi(x_j)]=\varphi([x_i,x_j])$ with $1\leq i\leq k,$  we derive
$$\left\{\begin{array}{ll}
D_{i,m}H_{j,m}=0,  & 1\leq i\neq j,m\leq k,\\[1mm]
D_{i,m}H_{i,m}=D_{i,m},  & 1\leq i,m\leq k,\\[1mm]
F_{i,m}H_{j,m}+\gamma_mF_{j,m}H_{i,m}=0,  & 1\leq i,j,m\leq k.\\[1mm]
\end{array}\right.$$

So, for a given value of $j$ we have a linear system with respect to $H_{j,1}, \ H_{j,2}, \ \dots,\ H_{j,k}$.

Let us prove that for a fixed $j, \ 1\leq j\leq k$ there exists only $m_0$ such that
$H_{j,m_0}=1$ and $H_{j,m}=0$ with $1\leq m\neq m_0\leq k.$

Let us suppose that $H_{j,m_0}=H_{j,m_1}=1$, then we get $D_{i,m_0}=D_{i,m_1}=0$ for $1\leq i\neq j\leq k.$ On the other hand, $det(D_{i,m})_{i,m=1}^k=0,$ that is, we arrive at contradiction. Without loss of generality, we can assume that $H_{j, j} =1 $ and $ H_{j, i} = 0 $ with $ 1 \leq j\neq i \leq k $.

Then, we obtain the following restrictions:
$$\left\{\begin{array}{lll}
D_{j,j}\neq0,  & D_{j,m}=0,  & 1\leq m\neq j\leq k,\\[1mm]
F_{i,j}=0,  & (1+\gamma_{j})F_{j,j}=0,  & 1\leq i\neq j\leq k,\\[1mm]
\end{array}\right.$$
which imply
$$\varphi(f_i)=D_{i,i}f_i, \quad \varphi(x_i)=F_{i,i}f_i+x_i,\quad 1\leq i\leq k,$$
where $(1+\gamma_i)F_{i,i}=0$ and $\gamma_i\in\{-1,0\}$ for $1\leq i\leq k.$
\end{proof}

\section{solvable leibniz algebras with $n$-dimensional naturally graded $p$-filiform non-Lie Leibniz algebra and maximal dimension of $Q.$}

In this section we give a description of  solvable Leibniz algebras whose nilradical is a naturally graded $p$-filiform Leibniz algebra and the dimension of $Q$ is maximal. Firstly, we recall the derivations of the algebras $\mu_i,\ i=1,2,3$ given in \cite{Adashev}.

\subsection{Derivations of algebras $\mu_i,\ i=1,2,3$}

\begin{prop} \label{prop1} Any derivation of the algebra $\mu_1$ has the following matrix form:
\[\mathbb{D}=\begin{pmatrix}
A&B\\
C&D
\end{pmatrix},\quad with \quad  D=\begin{pmatrix}
D_1&D_2\\
0&a_1\mathbb{E}+D_1
\end{pmatrix},\]
where
\[A=\sum_{i=1}^{n-2k}ia_{1}e_{i,i}+\sum_{i=1}^{n-2k-1}\sum_{j=i+1}^{n-2k}a_{j-i+1}e_{i,j}, \quad B=\sum_{i=1}^{2k}b_{i}e_{1,i}+\sum_{i=1}^{k}b_{i}e_{2,k+i}, \quad C=\sum_{i=1}^{k}c_ie_{i,n-2k},\]
$\ A\in M_{n-2k,n-2k}, \ B\in M_{n-2k,2k}, \ C\in M_{2k,n-2k}, \ D_1, D_2, \mathbb{E} \in M_{k,k}$ and matrix units $e_{i,j}$.
\end{prop}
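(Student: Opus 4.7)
The plan is to use the standard method for determining derivations of an algebra with explicit multiplication table: write $d$ generically on basis elements, then extract constraints by applying the derivation identity to every defining product of $\mu_1$ and to every vanishing product. Parametrize
\[d(e_i)=\sum_{s=1}^{n-2k} A_{i,s}\,e_s + \sum_{t=1}^{2k} B_{i,t}\,f_t,\qquad d(f_j)=\sum_{s=1}^{n-2k} C_{j,s}\,e_s + \sum_{t=1}^{2k} D_{j,t}\,f_t,\]
so that $A,B,C,D$ become matrices and the conclusion will read $\mathbb{D}=\begin{pmatrix} A & B \\ C & D \end{pmatrix}$.

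The backbone of the argument is to exploit the two nonzero product families recursively. The identity $d(e_{i+1})=[d(e_i),e_1]+[e_i,d(e_1)]$ for $1\leq i\leq n-2k-1$ determines each $d(e_i)$ with $i\geq 2$ in terms of $d(e_1)$, and iteration produces the Toeplitz-like shape of $A$: super-diagonal entries depending only on $j-i$ (renamed $a_{j-i+1}$) and diagonal entries $i\,a_1$. This cascade interacts with the $f$-generators through $[e_1, f_j]=f_{k+j}$: the $f$-components of $d(e_1)$ are pushed into $d(e_2)$ only through positions $k+j$, which forces rows $3,\ldots,n-2k$ of $B$ to vanish and couples the second row of $B$ to the first row with a shift by $k$, explaining the shared symbols $b_i$ in the two nonzero rows of $B$.

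For the block $D$ and the column-concentration of $C$, I would use $d(f_{k+j})=[d(e_1),f_j]+[e_1,d(f_j)]$ for $1\leq j\leq k$. This equation relates $d(f_{k+j})$ to $d(f_j)$; the $e$-components of $d(f_j)$ are sent up the $e$-chain by applying $\mathcal{R}_{e_1}$, and because the chain terminates at $e_{n-2k}$, all but the top coordinate are forced to vanish, yielding $C=\sum_{i=1}^k c_i e_{i,n-2k}$. On the $f$-part the same identity gives $D_{k+j,k+t}=D_{j,t}+a_1\delta_{j,t}$, i.e.\ the lower-right block equals $a_1\mathbb{E}+D_1$, the upper-right block $D_2$ is free, and the lower-left block vanishes. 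The remaining vanishing products ($[e_i,f_j]=0$ for $i\geq 2$, $[f_i,\,\cdot\,]=0$, $[e_i,e_j]=0$ for $j\geq 2$) close the system. The main obstacle is purely bookkeeping: tracking which coefficients get fixed by which identity and recognizing the free parameters as the stated $a_r$, $b_s$, $c_i$, and the entries of $D_1, D_2$. The subtlest point is the coupling forcing the first two rows of $B$ to share entries $b_1,\ldots,b_k$ (shifted by $k$ in the second row), which arises precisely from how $d(e_1)$ feeds $d(e_2)$ through the family $[e_1, f_j]=f_{k+j}$.
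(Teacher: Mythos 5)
The paper itself gives no proof of Proposition~\ref{prop1}: it is recalled verbatim from \cite{Adashev}, so there is nothing internal to compare against. Your outline is the standard (and essentially the only) route --- parametrize $d$ on the basis, feed in the two product families $[e_i,e_1]=e_{i+1}$ and $[e_1,f_j]=f_{k+j}$ together with all vanishing products --- and I checked that it does yield exactly the stated block structure: the recursion $d(e_{i+1})=[d(e_i),e_1]+[e_i,d(e_1)]$ gives the upper-triangular Toeplitz-above-the-diagonal $A$ with diagonal $ia_1$, and the coupling of rows $1$ and $2$ of $B$ (and the vanishing of rows $\geq 3$) arises precisely as you say, because $[e_i,f_t]=0$ for $i\geq 2$ stops the $f$-components from propagating past $d(e_2)$. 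One attribution is off, though it does not break the argument: the column-concentration of $C$ does \emph{not} come from the identity $d(f_{k+j})=[d(e_1),f_j]+[e_1,d(f_j)]$ --- there the $e$-part of $d(f_j)$ is hit by \emph{left} multiplication by $e_1$, which kills everything except $C_{j,1}e_2$ --- but from $0=d([f_j,e_1])=[d(f_j),e_1]$, i.e.\ from $\mathcal{R}_{e_1}$ annihilating $d(f_j)$, which shifts $\sum_s C_{j,s}e_s$ up the chain and forces $C_{j,s}=0$ for $s\leq n-2k-1$. Since you list $[f_i,\cdot]=0$ among the closing identities, the needed relation is in your system; just be aware that without it the displayed identity alone leaves $C_{j,1}$ (hence a spurious $e_2$-component of $d(f_{k+j})$) undetermined. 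With that relabeling, the free parameters are exactly $a_1,\dots,a_{n-2k}$, $b_1,\dots,b_{2k}$, $c_1,\dots,c_k$ and the $2k^2$ entries of $D_1,D_2$, matching the proposition.
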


\begin{prop} \label{prop2} Any derivation of the algebra $\mu_2$ has the following matrix form:
\[\mathbb{D}=\begin{pmatrix}
A&B\\
C&D
\end{pmatrix}, \quad with \quad  D=\begin{pmatrix}
D_1&D_2\\
0&D_3
\end{pmatrix},\]
where
\[A=\sum_{i=1}^{n-2k}(ia_{1}+(i-1)b_1)e_{i,i}+\sum_{i=1}^{n-2k-1}\sum_{j=i+1}^{n-2k}a_{j-i+1}e_{i,j},\quad B=\sum_{i=1}^{2k}b_{i}e_{1,i}+\sum_{i=1}^{k}b_{i}e_{2,k+i},\]
 \[C=\sum_{i=1}^{k}c_ie_{i,n-2k}, \quad D_1=\sum_{i=1}^{k}\sum_{j=2}^{k}d_{i,j}e_{i,j}+(a_1+b_{1})e_{1,1},\quad D_3=D_1+a_1\mathbb{E}-\sum_{j=1}^{k}b_{j}e_{1,j},\]
with
$\ A\in M_{n-2k,n-2k}, \ B\in M_{n-2k,2k},  \ C\in M_{2k,n-2k}, \ D_1, D_2, D_3, \mathbb{E} \in M_{k,k}$
and matrix units $e_{i,j}$.
\end{prop}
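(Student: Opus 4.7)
The plan is to take an arbitrary linear map $\mathbb{D}\colon\mu_2\to\mu_2$, write it in block form
\[\mathbb{D}=\begin{pmatrix} A & B \\ C & D \end{pmatrix}\]
with respect to the ordered basis $\{e_1,\dots,e_{n-2k},f_1,\dots,f_{2k}\}$, and then impose the derivation identity $\mathbb{D}([x,y])=[\mathbb{D}(x),y]+[x,\mathbb{D}(y)]$ on every defining product of $\mu_2$. The result is a finite system of linear equations on the entries, and the content of the proposition is to read off its general solution.

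I would start with the chain $[e_i,e_1]=e_{i+1}$ for $1\le i\le n-2k-1$. Applying $\mathbb{D}$ and using induction on $i$ propagates the data encoded in $\mathbb{D}(e_1)$ and $\mathbb{D}(e_2)$ down the basis, forcing $A$ to be upper triangular with the Toeplitz strict upper triangle $\sum a_{j-i+1}e_{i,j}$, and constraining the rows of $B$ so that only the first two rows of $B$ are free (whence the form $B=\sum b_i e_{1,i}+\sum b_i e_{2,k+i}$). The vanishing $[f_i,e_1]=0$ together with the identity then localizes the $f\to e$ part: each row of $C$ can only hit the top of the $e$-flag, which collapses to $C=\sum c_i e_{i,n-2k}$.

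Next I would feed in the relations involving $f$'s: $[e_1,f_j]=f_{k+j}$ for $2\le j\le k$, $[e_i,f_1]=e_{i+1}$ for $2\le i\le n-2k-1$, and crucially the twisted relation $[e_1,f_1]=e_2+f_{k+1}$. Expanding $\mathbb{D}$ on these products constrains $D$. Applied to $[e_1,f_j]=f_{k+j}$ for $j\ge 2$, it kills the lower-left $k\times k$ block of $D$ (so $D$ is block upper-triangular with blocks $D_1,D_2,D_3$) and fixes the shape of $D_1$ as $\sum_{i=1}^k\sum_{j=2}^k d_{i,j}e_{i,j}$ plus a single $(1,1)$-entry to be determined. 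The new phenomenon, absent in $\mu_1$, is the cross-term $e_2+f_{k+1}$: matching coefficients on both sides of the derivation identity applied to $[e_1,f_1]$ pins down the $(1,1)$-entries of $A$ and $D_1$ to be $a_1+b_1$, introduces the $(i-1)b_1$ correction on the diagonal of $A$ (by re-running the recursion for $[e_i,e_1]=e_{i+1}$ with this new datum), and forces the identification $D_3=D_1+a_1\mathbb{E}-\sum_{j=1}^k b_j e_{1,j}$.

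The main obstacle is organizational rather than conceptual: $\mu_2$ has many defining relations with subtle special cases for small indices ($i=1$ versus $i\ge 2$, $j=1$ versus $j\ge 2$), and the twist $[e_1,f_1]=e_2+f_{k+1}$ couples the $e$-block to the $f$-block so that information propagated from the $e$-recursion reappears inside $D$. The bookkeeping must be done carefully enough that the corrections $(i-1)b_1$ on the diagonal of $A$ and the subtraction $-\sum_{j=1}^k b_j e_{1,j}$ in $D_3$ both emerge from the same matching. Once every defining relation has been processed, no further constraints remain, and the matrix form stated in the proposition is exactly the general solution. A final check that the displayed matrix does satisfy \eqref{eq0} on all products of $\mu_2$ (a direct substitution) certifies sufficiency.
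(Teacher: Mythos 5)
Your plan is correct and is the standard (indeed essentially the only) way to establish this proposition: write a general linear map in block form and impose \eqref{eq0} on each defining bracket of $\mu_2$, propagating along the chain $[e_i,e_1]=e_{i+1}$ and letting the twisted relation $[e_1,f_1]=e_2+f_{k+1}$ produce the $(i-1)b_1$ diagonal correction in $A$ and the coupling $D_3=D_1+a_1\mathbb{E}-\sum_j b_je_{1,j}$. Note that the paper itself gives no proof here --- the proposition is recalled from \cite{Adashev} --- but your outline reproduces the computation correctly, and spot-checking the key entries (e.g.\ the coefficient of $f_{k+1}$ in $d(f_{k+1})$ coming out as $2a_1$) confirms the stated matrix form.
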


\begin{prop} \label{prop3}  Any derivation of the algebra $\mu_3$ has
the following matrix form:
\[\mathbb{D}=\begin{pmatrix}
A&B\\
C&D
\end{pmatrix}, \quad with \quad  D=\begin{pmatrix}
D_1&D_2\\
0&a_2\mathbb{E}+D_1
\end{pmatrix},\]
where
\[A=a_1e_{1,1}+\sum_{i=2}^{n-2k}((i-2)a_{1}+a_2)e_{i,i}+\beta e_{1,n-2k}+\sum_{i=2}^{n-2k-1}\sum_{j=i+1}^{n-2k}a_{j-i+2}e_{i,j},\]
\[B=\sum_{i=1}^{2k}b_{1,i}e_{1,i}+\sum_{i=1}^{k}b_{2,i}e_{2,k+i}+\sum_{i=1}^{k}b_{1,i}e_{3,k+i}, \quad C=\sum_{i=1}^{k}c_ie_{i,n-2k},\]
with $A\in M_{n-2k,n-2k}, \ B\in M_{n-2k,2k}, \ C\in M_{2k,n-2k}, \ D_1,D_2, \mathbb{E} \in M_{k,k}$ and
matrix units $e_{i,j}$.
\end{prop}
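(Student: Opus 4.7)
The plan is to extend an arbitrary derivation $d$ of $\mu_3$ from its values on a minimal generating set, using the defining relations, and then to collect the constraints that force the asserted matrix form. I would begin by writing
$$d(e_i) = \sum_{j=1}^{n-2k} a_{i,j} e_j + \sum_{j=1}^{2k} b_{i,j} f_j, \qquad d(f_\ell) = \sum_{j=1}^{n-2k} c_{\ell,j} e_j + \sum_{j=1}^{2k} d_{\ell,j} f_j,$$
and applying the derivation identity \eqref{eq0} to each defining relation of $\mu_3$ in the simplified form given at the end of Section~2.

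First I would exploit the relation $[e_i,e_1]=e_{i+1}$ for $2\le i\le n-2k-1$: this yields $d(e_{i+1})=[d(e_i),e_1]+[e_i,d(e_1)]$, which by induction on $i$ determines $d(e_i)$ for $i\ge 3$ in terms of $d(e_1)$ and $d(e_2)$. The key observation is that right multiplication by $e_1$ acts as a shift on the chain $e_2,e_3,\ldots,e_{n-2k}$, while $[e_i,f_j]=0$ for $i\ge 3$, so the $f$-components of $d(e_i)$ drop out after one step and the $e$-components shift in a controlled way, producing the stated Toeplitz-type pattern in the rows of $A$ starting from row~$2$ and collapsing $C$ into a matrix supported in the last column. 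In parallel, the relation $[e_2,f_j]=f_{k+j}$ gives $d(f_{k+j})=[d(e_2),f_j]+[e_2,d(f_j)]$; the non-$e_2$ components of $d(e_2)$ annihilate $f_j$, so this equation reads schematically $d(f_{k+j})=a_{2,2}\,f_{k+j}+[e_2,d(f_j)]$, which produces the block identity $D_{22}=a_2\mathbb{E}+D_1$ and forces $D$ to be block upper triangular.

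Next I would impose all the zero products of $\mu_3$: $[e_1,e_1]=0$, $[e_1,e_i]=0$ for $i\ge 2$, $[e_1,f_j]=0$, $[f_\ell,e_1]=0$, $[f_\ell,e_i]=0$, and $[f_\ell,f_m]=0$. Each gives a linear constraint on the unknowns. The symmetrised relation $[d(e_1),e_1]+[e_1,d(e_1)]=0$ together with the chain shift forces the first row of $A$ to consist only of $a_1$ on the diagonal and a free entry $\beta$ in the last column; the vanishing of $[f_\ell,e_1]$ and $[f_\ell,f_m]$ kills all $c_{\ell,j}$ except in the column $j=n-2k$ and only for $\ell=1,\dots,k$ (not $\ell=k+1,\dots,2k$), recovering the stated form of $C$. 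The constraint $[e_1,f_j]=0$ together with $d([e_1,f_j])=0=[d(e_1),f_j]+[e_1,d(f_j)]$ compares the $f_{k+j}$-components of $d(e_1)$ (read off through $[e_1,f_j]$'s image under shift) with those of $d(e_3)$ (since $[e_2,\cdot]$ reappears via the inductive formula), which is precisely what produces the coincidence $b_{3,k+i}=b_{1,i}$ displayed in $B$.

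The main obstacle is purely organisational: because there are three distinct structural pieces (the element $e_1$, the chain $e_2,\ldots,e_{n-2k}$, and the two halves of the $f$-block), one must carefully track how each vanishing product eliminates a family of coefficients without double-counting constraints. I would therefore process relations in a fixed order—first the generating relations $[e_i,e_1]=e_{i+1}$ and $[e_2,f_j]=f_{k+j}$ to express everything in terms of $d(e_1), d(e_2), d(f_1),\dots,d(f_k)$, then the vanishing relations involving $e_1$, then those among the $f_\ell$'s—and at each step verify that the surviving parameters are exactly those appearing in the statement. The verification that every matrix $\mathbb{D}$ of the asserted form is indeed a derivation is the easy reverse direction and amounts to a direct substitution into \eqref{eq0} on each basis pair.
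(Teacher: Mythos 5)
Your overall strategy is the right one, and it is worth noting that the paper itself offers nothing to compare it with: Proposition~\ref{prop3} is not proved here but recalled from \cite{Adashev}, where the argument is likewise a direct verification of the derivation identity \eqref{eq0} on the defining relations of the simplified form of $\mu_3$. Your plan --- propagate $d$ along the chain via $[e_i,e_1]=e_{i+1}$, use $[e_2,f_j]=f_{k+j}$ to pin down $d(f_{k+j})$ and the block structure of $D$, then impose the vanishing products --- does terminate in the stated matrix form, and the mechanisms you identify in the second paragraph (the $f$-components of $d(e_1)$ surviving for exactly one step of the chain, the Toeplitz pattern in $A$ from the shift) are the correct ones.

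Two of your specific attributions are wrong, however, and should be repaired before the computation is executed. First, the identity $b_{3,k+i}=b_{1,i}$ does not come from $[e_1,f_i]=0$: applying $d$ to that relation yields only the vanishing of the $(1,2)$-entry of $A$, since $[e_1,\cdot]=0$ and the only nonzero bracket $[\cdot,f_i]$ is $[e_2,f_i]$. The coincidence of row $3$ of $B$ with row $1$ comes from the inductive step $d(e_3)=[d(e_2),e_1]+[e_2,d(e_1)]$, in which the $f_i$-components ($1\le i\le k$) of $d(e_1)$ are pushed to $f_{k+i}$ --- exactly the mechanism you state correctly earlier, so this later sentence contradicts your own earlier (correct) one. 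Second, the relations $[f_\ell,e_1]=0$ and $[f_\ell,f_m]=0$ do \emph{not} suffice to reduce $C$ to its last column: they only kill $c_{\ell,j}$ for $2\le j\le n-2k-1$. To eliminate $c_{\ell,1}$ for $\ell\le k$ one must first record $d(f_{k+\ell})=c_{\ell,1}e_3+a_2f_{k+\ell}+\sum_{t}d_{\ell,t}f_{k+t}$ from $[e_2,f_\ell]=f_{k+\ell}$ and then feed this into $[f_{k+\ell},e_1]=0$; and to annihilate rows $k+1,\dots,2k$ of $C$ together with the lower-left block of $D$ one needs $[e_2,f_{k+\ell}]=0$, which is missing from your list of zero products, as is $[e_i,e_j]=0$ for $i,j\ge2$ (needed to kill the entries of row $2$ of $B$ in columns $1,\dots,k$ and the $(2,1)$-entry of $A$). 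None of this changes the outcome, but as written the constraint bookkeeping would not close, and the ``fixed order of relations'' you propose omits precisely the relations that finish the job.
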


The theorem bellow describes the maximal dimensions of the complemented space to $\mu_i,\ i=1,2,3$.

\begin{thm} \label{thm5} Let $R$ be a solvable Leibniz algebra whose nilradical is $\mu_i,\ i=1,2,3$.
Then the dimension of complemented space to nilradical verifies that:
$$dim\ Q(\mu_i)\leq k+2\left[\frac{i}{3}\right].$$

\end{thm}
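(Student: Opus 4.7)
The plan is to invoke the bound of \cite{Nulfilrad}: for a solvable Leibniz algebra $R=N\oplus Q$ with nilradical $N$, $\dim Q$ is at most the maximal number of nil-independent derivations of $N$. Each $q\in Q$ yields a non-nilpotent derivation $R_q|_{\mu_i}$, and any nontrivial linear combination of these remains non-nilpotent, since $\mu_i$ is the maximal nilpotent ideal. The task reduces to bounding the rank of such a family inside $\Der(\mu_i)$.

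Using Propositions \ref{prop1}--\ref{prop3}, I would pass to a basis in which the family is simultaneously diagonal on $\{e_1,\dots,e_{n-2k},f_1,\dots,f_{2k}\}$, writing $R_q(e_i)=\lambda_ie_i$ and $R_q(f_j)=\mu_jf_j$, and record the linear constraints coming from the derivation identity applied to the defining relations of $\mu_i$. For $\mu_3$ these yield $k+2$ free scalars, namely the two scaling parameters $a_1,a_2$ on the $e$-chain together with the diagonal entries $(D_1)_{11},\dots,(D_1)_{kk}$, matching the claimed bound $k+2$. For $\mu_2$, the additional relation $[e_i,f_1]=e_{i+1}$ for $i\geq 2$ (absent in $\mu_1$) forces $\mu_1=\lambda_1$, trimming the naive $k+1$ count down to $k$.

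The case $\mu_1$ is more delicate: the derivation identity alone still leaves $k+1$ free parameters $(\lambda_1,\mu_1,\dots,\mu_k)$. To descend to $k$, I would invoke the Leibniz identity on the full extension $R$, not merely on $\mu_1$. From $[n_1,[q,n_2]]=[[n_1,q],n_2]-[[n_1,n_2],q]$ one obtains $L_q(n)+R_q(n)\in\Ann_r(\mu_1)$ for every $n\in\mu_1$; combined with $[q,[f_1,e_1]]=[[q,f_1],e_1]-[[q,e_1],f_1]$ and $[f_1,e_1]=0$ in $\mu_1$, this yields the compatibility $[L_q(f_1),e_1]=[L_q(e_1),f_1]$. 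Expanding via the above description of $L_q$, the left side lies in $\mathrm{span}(e_3,\dots,e_{n-2k})$ while the right side equals $-\lambda_1 f_{k+1}$; matching the coefficient of $f_{k+1}$ forces $\lambda_1=0$, completing the reduction. I expect the main obstacle to lie precisely in this last step, which requires carefully propagating the Leibniz identity of $R$ through $L_q$ to extract a constraint on $R_q|_{\mu_1}$; everything else is direct bookkeeping with Propositions \ref{prop1}--\ref{prop3}.
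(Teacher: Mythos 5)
Your overall strategy is the paper's: bound $\dim Q$ by the maximal number of nil-independent derivations via \cite[Theorem 3.2]{Nulfilrad}, count the independent ``diagonal'' parameters of a derivation from Propositions \ref{prop1}--\ref{prop3}, and use the Leibniz identity of the full extension $R$ to eliminate the parameter $a_1$. Your handling of $\mu_1$ and $\mu_3$ matches the paper (the identity you write for $\mu_1$ is exactly ${\mathcal L}(x,f_1,e_1)=0$, combined with ${\mathcal L}(e_1,x,e_1)=0$ to transfer the constraint from the left to the right multiplication; for $\mu_3$ the surviving parameters $a_1,a_2,d_{1,1},\dots,d_{k,k}$ give $k+2$). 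Two caveats: a family of derivations of $\mu_i$ cannot literally be made ``simultaneously diagonal'' on the $e$-chain (the $A$-block is only triangular and the blocks $B,C$ are nonzero), so the reduction of nilpotency of $\mathbb{D}$ to nilpotency of its diagonal part needs the block-recurrence argument the paper supplies; this is repairable. The serious problem is your $\mu_2$ case.

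For $\mu_2$ you claim that the derivation identity alone, via $[e_i,f_1]=e_{i+1}$, forces the $f_1$-eigenvalue to equal the $e_1$-eigenvalue and thereby trims the count from $k+1$ to $k$. This is false, and the failure is caused precisely by the diagonalization assumption. By Proposition \ref{prop2}, $d(e_1)$ carries an $f_1$-component $b_1$, and since $[e_2,f_1]=e_3$ in $\mu_2$, this component contributes $b_1e_3$ to $[e_2,d(e_1)]$; one then checks that applying $d$ to $[e_2,e_1]=e_3$ and to $[e_2,f_1]=e_3$ both yield $A_{3,3}=3a_1+2b_1$ with no relation forced between $a_1$ and $b_1$. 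Consequently $\mu_2$ genuinely admits $k+1$ nil-independent derivations (parameters $a_1,b_1,d_{2,2},\dots,d_{k,k}$), and the derivation-level count only gives $\dim Q(\mu_2)\leq k+1$. The reduction to $k$ requires the same extension-level argument you reserve for $\mu_1$: the paper applies ${\mathcal L}(x,f_1,e_1)={\mathcal L}(e_1,x,e_1)=0$ to kill $a_1$ in both cases. Note also that even if your claimed relation held it would eliminate $b_1$ rather than $a_1$, which is the wrong parameter: in the resulting classification (Theorem \ref{mu2}) it is $b_1$ that survives, via $[e_2,x_1]=e_2+f_{k+1}$, $[e_j,x_1]=(j-1)e_j$. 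So the $\mu_2$ branch of your argument needs to be replaced by the extension-level computation.
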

\begin{proof} According to Propositions \ref{prop1} and \ref{prop2}, we have the following expresions for $R(\mu_1, s)$ and $R(\mu_2, s)$, respectively:

$$\begin{array}{l}
\left\{\begin{array}{ll}
[e_1,x]=\sum\limits_{i=1}^{n-2k}a_ie_i+\sum\limits_{i=1}^{2k}b_if_i,&\\[1mm]
[e_2,x]=2a_1e_2+\sum\limits_{i=3}^{n-2k}a_{i-1}e_i+\sum\limits_{i=1}^{k}b_if_{k+i},&\\[1mm]
\end{array}\right.\quad \left\{\begin{array}{ll}
[e_1,x]=\sum\limits_{i=1}^{n-2k}a_{i}e_i+\sum\limits_{i=1}^{2k}b_{i}f_i,&\\[1mm]
[e_2,x]=(2a_{1}+b_{1})e_2+\sum\limits_{i=3}^{n-2k}a_{i-1}e_i+\sum\limits_{i=1}^{k}b_{i}f_{k+i},&\\[1mm]
\end{array}\right.
\end{array}$$

Let us introduce the following notations:
\[[x,e_1]=\sum\limits_{i=1}^{n-2k}\beta_ie_i+\sum\limits_{i=1}^{2k}\beta_{n-2k+i}f_i, \qquad \quad
[x,f_1]=\sum\limits_{i=1}^{n-2k}\gamma_{i}e_i+\sum\limits_{i=1}^{2k}\varphi_{i}f_i.\]

The equalities ${\mathcal L}(x, f_1, e_1)={\mathcal L}(e_1, x, e_1)=0$ imply $a_1=0.$

Note that $\{f_1,f_2,\dots,f_k\}$ form the algebra $\mathbf{a_k}$ and the space of derivations of the algebra
$\mathbf{a_k}$ coincided with $M_{k,k}.$

It is easy to see that
$$\mathcal{R}_{{x|}_{\mathbf{a_k}}}\circ\mathcal{R}_{{y
|}_{\mathbf{a_k}}}=\mathcal{R}_{{y |}_{\mathbf{a_k}}}\circ\mathcal{R}_{{x|}_{\mathbf{a_k}}}$$ for any $x,y\in
Q$. This implies that all operators $\mathcal{R}_{{x_i
|}_{\mathbf{a_k}}}, \ 1\leq i \leq s$ could be simultaneously
transformed to their Jordan forms by a basis transformation.
Therefore, the matrix operator $\mathcal{R}_{{x |}_{\mathbf{a_k}}}$ (in our case $\mathcal{R}_{{x |}_{\mathbf{a_k}}}=D_1$)
has the following form:
$$D_1=\left(\begin{array}{cccccc}
 d_{1,1}&d_{1,2}&0&\dots&0&0\\
0&d_{2,2}&d_{2,3}&\dots&0&0\\
 0& 0& d_{3,3}&\dots&0&0\\
\vdots&\vdots& \vdots&\vdots &\vdots&\vdots \\
0& 0& 0&\dots& d_{k-1,k-1}& d_{k-1,k}\\
0& 0& 0&\dots& 0& d_{k,k}\\
\end{array}\right),$$
where $d_{i,i+1}\in\{0,1\}$ for $1\leq i\leq k-1.$

Now we are going to investigate the nilpotency of matrix $\mathbb{D}.$ Due to Propositions \ref{prop1}-\ref{prop3} the nilpotency of $\mathbb {D}$ depends on the matrices $A$ and $D_1.$

Let us consider the matrix $\mathbb {D}$ as follows
$$\mathbb{D}=\left(\begin{array}{cc}
A&B\\
C&D\\
\end{array}\right)=\left(\begin{array}{cc}
A_1+A_2&B\\
C&K_1+K_2\\
\end{array}\right),$$
where $A_1, K_1$ are diagonal matrices and $A_2, K_2$ are nilpotent such that
$$A_1=\left\{\begin{array}{ll}
diag\{0,0,0,\dots,0\},&\mbox{for} \quad \mu_1,\\[1mm]
diag\{0,b_1,2b_1,\dots,(n-2k-1)b_1\},&\mbox{for} \quad \mu_2,\\[1mm]
diag\{a_1,a_2,a_1+a_2,\dots,(n-2k-2)a_1+a_2\},&\mbox{for} \quad \mu_3,\\[1mm]
\end{array}\right.$$
$$K_1=\left\{\begin{array}{ll}
diag\{d_{1,1},d_{2,2},\dots,d_{k,k},d_{1,1},d_{2,2},\dots,d_{k,k}\},&\mbox{for} \quad \mu_1,\\[1mm]
diag\{b_{1},d_{2,2},\dots,d_{k,k},0,d_{2,2},\dots,d_{k,k}\},&\mbox{for} \quad \mu_2,\\[1mm]
diag\{d_{1,1},d_{2,2},\dots,d_{k,k},a_2+d_{1,1},a_2+d_{2,2},\dots,a_2+d_{k,k}\},&\mbox{for} \quad \mu_3.\\[1mm]
\end{array}\right.$$

It is easy to see that $CB=0$ and the matrices $A_1A_2, \ A_2^2,\ BC,\ K_1K_2, \ K_2^2$ are nilpotent.

Moreover, matrices $C(A_1+A_2), \ (K_1+K_2)C$ have the type of $C$ and matrices $(A_1+A_2)B, \ B(K_1+K_2)$ have the type of $B$.

According to the above arguments we have the following recurrence formula:
$$\mathbb{D}^t=\left(\begin{array}{cc}
A_1^t+\widetilde{A}_{2}&\widetilde{B}\\
\widetilde{C}&K_1^{t}+\widetilde{K}_2\\
\end{array}\right), \qquad t\geq1, $$
where $\widetilde{A}_{2}, \ \widetilde{K}_2-$ nilpotent matrices  and matrices $\widetilde{B},\ \widetilde{C}$ have the types of $B$ and $C,$ respectively.

To sum up, we conclude that the matrix $\mathbb{D}$ is nilpotent if and only if $A_1$ and $K_1$ are nilpotents.

Therefore, we obtain the following conclusions:
\begin{itemize}
\item For $\mu_1$, the nilpotency of $\mathbb{D}$ depends on
$d_{i, i},\ 1\leq i\leq k$, that is $\mathbb{D}$ nilpotent if only if $d_{i,i}=0, \ 1\leq i\leq k$.

\item For $\mu_2$, the nilpotency of $\mathbb{D}$ depends on
$b_1$ and $d_{i,i}, \ 2\leq i\leq k$, that is $\mathbb{D}$ nilpotent if only if $b_1=d_{i,i}=0, \ 2\leq
i\leq k$.

\item For $\mu_3,$ the nilpotency of $\mathbb{D}$ depends on
$a_1, a_2$ and $d_{i,i}, \ 1\leq i\leq k$, that is $\mathbb{D}$ nilpotent if only if $a_1=a_2=d_{i,i}=0, \ 1\leq
i\leq k$.
\end{itemize}

Applying the  result in \cite[Theorem 3.2]{Nulfilrad}, the stated inequalities follow.
\end{proof}

The following results will be used in the description of solvable Leibniz algebras whose nilradicals are $\mu_i,\ i=1,2,3$ and with maximal dimensional complemented space of nilradicals.

\begin{prop} \label{annulator}
Let $R$ be a solvable Leibniz algebra whose nilradical is a naturally graded $p$-filiform non-Lie Leibniz algebra. Then
$$\{e_1,f_1,\dots,f_k\}\cap\Ann_r(R)=0 \quad \mbox{and} \quad \{e_2,\dots, e_{n-2k}, f_{k+1},\dots,f_{2k}\}\subseteq \Ann_r(R),$$
with $a_2\neq0$ for the algebra $R(\mu_3,s).$
\end{prop}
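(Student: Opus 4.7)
The plan rests on three preliminaries that are all already in the paper or are immediate: (a)~for every $x,y\in R$ the elements $[x,x]$ and $[x,y]+[y,x]$ lie in $\Ann_r(R)$; (b)~$\Ann_r(R)$ is abelian and hence a nilpotent ideal, so $\Ann_r(R)\subseteq N$; (c)~for $y\in Q$ the operator $\mathcal{R}_y|_N$ is a derivation of $N$ and therefore has the matrix form of Propositions~\ref{prop1}--\ref{prop3}.

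For the inclusion $\{e_2,\dots,e_{n-2k},f_{k+1},\dots,f_{2k}\}\subseteq\Ann_r(R)$ I simply read each generator off as a Leibniz kernel element. In $\mu_1$: $e_2=[e_1,e_1]$, $e_{i+1}=[e_i,e_1]+[e_1,e_i]$ for $2\le i\le n-2k-1$ (using $[e_1,e_i]=0$), and $f_{k+j}=[e_1,f_j]+[f_j,e_1]$. In $\mu_2$ the same recipes give all $e_{i+1}$ and each $f_{k+j}$ with $j\ge 2$, and then $f_{k+1}$ follows from $[e_1,f_1]+[f_1,e_1]=e_2+f_{k+1}$ combined with $e_2\in\Ann_r(R)$. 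In $\mu_3$: $e_{i+1}=[e_i,e_1]+[e_1,e_i]$ for $i\ge 2$ and $f_{k+j}=[e_2,f_j]+[f_j,e_2]$ deliver $e_3,\dots,e_{n-2k}$ and $f_{k+1},\dots,f_{2k}$. The disjointness $\{e_1,f_1,\dots,f_k\}\cap\Ann_r(R)=0$ is immediate from the witnesses $[e_1,e_1]=e_2\ne 0$ (or $[e_2,e_1]=e_3\ne 0$ for $\mu_3$), which exclude $e_1$, and $[e_1,f_j]$ (resp.\ $[e_2,f_j]$ for $\mu_3$), which is nonzero for every $1\le j\le k$ and excludes $f_j$.

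The main obstacle is $e_2\in\Ann_r(R)$ for $\mu_3$, because $e_2$ is not a Leibniz kernel element of $\mu_3$. I argue by contradiction: assume $e_2\notin\Ann_r(R)$. Combined with (b) and the work above, this confines $\Ann_r(R)$ inside $\mathrm{span}\{e_3,\dots,e_{n-2k},f_{k+1},\dots,f_{2k}\}$. For any $y\in Q$, Proposition~\ref{prop3} gives $[e_2,y]=a_2(y)\,e_2$, and (a) then forces the $e_2$-component of $[y,e_2]$ to equal $-a_2(y)$. On the other hand, the Leibniz identity
\[
\mathcal{L}(y,e_2,e_1)=[y,[e_2,e_1]]-[[y,e_2],e_1]+[[y,e_1],e_2]=0,
\]
together with $[y,e_3]=0$ (since $e_3\in\Ann_r(R)$) and $[[y,e_1],e_2]=0$ (since $[y,e_1]\in N$ and no product of $\mu_3$ places $e_2$ on the right), yields $[[y,e_2],e_1]=0$. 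Expanding this bracket via the multiplication table of $\mu_3$ kills the coefficients of $e_2,e_3,\dots,e_{n-2k-1}$ in $[y,e_2]$; in particular the $e_2$-coefficient vanishes, so $a_2(y)=0$.

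Since $y\in Q$ was arbitrary, every derivation $\mathcal{R}_y|_N$ has $a_2=0$, so the diagonal nilpotency parameters of these derivations fit inside the $(k+1)$-dimensional subspace spanned by $\{a_1,d_{1,1},\dots,d_{k,k}\}$; by the nilpotency analysis carried out inside the proof of Theorem~\ref{thm5}, this forces $\dim Q\le k+1$, contradicting the maximal dimension $k+2$ that is the standing hypothesis of this section. Hence $e_2\in\Ann_r(R)$, and the same counting then forces at least one $y\in Q$ with $a_2(y)\ne 0$, which is precisely the final assertion.
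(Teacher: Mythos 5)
Your proposal is correct in substance and its computational core coincides with the paper's: the inclusion $\{e_2,\dots,e_{n-2k},f_{k+1},\dots,f_{2k}\}\subseteq\Ann_r(R)$ (with $e_2$ excepted for $\mu_3$) is read off from the Leibniz kernel elements $[x,x]$ and $[x,y]+[y,x]$, and the delicate $e_2$-case for $\mu_3$ is settled by exactly the same two ingredients the paper uses, namely ${\mathcal L}(x,e_2,e_1)=0$ killing the coefficients $\alpha_2,\dots,\alpha_{n-2k-1}$ of $[x,e_2]$, combined with $[e_2,x]+[x,e_2]\in\Ann_r(R)$. Where you genuinely diverge is in the logical packaging of the clause ``with $a_2\neq0$'': the paper reads it as a \emph{hypothesis} (``Let us suppose $a_2\neq0$\dots'') and proves the conditional statement for arbitrary $s$, whereas you run the same computation in contrapositive form ($e_2\notin\Ann_r(R)\Rightarrow a_2(y)=0$ for all $y\in Q$) and then invoke the nil-independence count of Theorem \ref{thm5} to contradict $\dim Q=k+2$, thereby obtaining both $e_2\in\Ann_r(R)$ and the existence of some $y$ with $a_2(y)\neq0$ as \emph{conclusions}. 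Your version is arguably more informative in the maximal case (which is the only case the paper subsequently uses), but it does not literally prove the proposition as stated for $R(\mu_3,s)$ with $s<k+2$; the contrapositive implication you establish before the dimension count is, however, exactly equivalent to the paper's conditional claim, so nothing is lost. One small point to tighten: the assertion that $e_2\notin\Ann_r(R)$ ``confines $\Ann_r(R)$ inside $\mathrm{span}\{e_3,\dots,e_{n-2k},f_{k+1},\dots,f_{2k}\}$'' does not follow merely from the individual non-membership of $e_1,f_1,\dots,f_k$; you should note that if $\lambda_1e_1+\lambda_2e_2+\sum\mu_if_i\in\Ann_r(R)$ then bracketing with $e_2$ on the left gives $\lambda_1e_3+\sum\mu_if_{k+i}=0$, whence $\lambda_1=\mu_i=0$ and then $\lambda_2=0$ by the contradiction hypothesis. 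This is a one-line fix (and the paper is equally terse at the corresponding step), so it is not a genuine gap.
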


\begin{proof}

Using Theorem \ref{p-filiform} and the properties of the right annihilator (that is, $[x,x],[x,y]+[y,x]\in \Ann_r(R)$) the assertion easily follows for $R(\mu_1,s)$ and $R(\mu_2,s).$

Consider the algebra $R(\mu_3,s).$ It is easy to see that
$$e_1,f_1,\dots,f_k\notin \Ann_r(R) \quad \mbox{and} \quad e_3,\dots, e_{n-2k}, f_{k+1},\dots,f_{2k}\in \Ann_r(R).$$

Let us suppose $a_2\neq 0$. Then, from the derivation of $\mu_3$ we get
\[[e_2,x]=\sum_{i=2}^{n-2k}a_ie_{i}+\sum_{i=1}^{k}b_{2,i}f_{k+i}, \quad  [x,e_2]=\sum_{i=1}^{n-2k}\alpha_ie_{i}+\sum_{i=1}^{2k}\beta_{i}f_{i}.\]

The equality $\mathcal{L}(x,e_2,e_1)=0$ implies  $\alpha_i=0$ for $2\leq i\leq n-2k-1.$ Since $[e_2,x]+[x,e_2]\in \Ann_r(R)$ and  $a_2\neq 0$,
 we have $e_2\in \Ann_r(R)$ which complete the proof.
 \end{proof}

\begin{lem} \label{maxabelian} Let $R$ be a solvable Leibniz algebra whose nilradical is a naturally graded $p$-filiform Leibniz algebra.
Then the maximal solvable Leibniz subalgebra with nilradical $\mathbf{a_k}=<f_1,\dots,f_k>$ of $R$ is isomorphic to  $\mathcal{L}(\gamma_i)$ with $\gamma_i=-1$ for $1\leq i\leq k$.
\end{lem}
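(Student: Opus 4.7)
The plan is to deduce this from Theorem \ref{abelian} combined with a short annihilator argument. Theorem \ref{abelian} implies that any solvable Leibniz subalgebra $S\subseteq R$ with nilradical $\mathbf{a_k}$ satisfies $\dim S\le 2k$, and when equality holds it is isomorphic to some $\mathcal{L}(\gamma_i)$ with each $\gamma_i\in\{-1,0\}$. All that remains is therefore to rule out $\gamma_i=0$ and to verify existence of a $2k$-dimensional such subalgebra.

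Fix a maximal $S$ with basis $\{f_1,\dots,f_k,y_1,\dots,y_k\}$ realising the model $\mathcal{L}(\gamma_i)$, so that the relations
\[
[f_i,y_i]=f_i,\qquad [y_i,f_i]=\gamma_i f_i
\]
hold by construction in $R$. The general identity $[u,v]+[v,u]\in\Ann_r(R)$ (noted right after the definition of the right annihilator) then yields
\[
(1+\gamma_i)f_i=[f_i,y_i]+[y_i,f_i]\in\Ann_r(R).
\]
But Proposition \ref{annulator} asserts $f_i\notin\Ann_r(R)$ for each $1\le i\le k$, so necessarily $1+\gamma_i=0$, i.e.\ $\gamma_i=-1$ for every such $i$.

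It remains to produce the $2k$-dimensional subalgebra itself. Here I would use Propositions \ref{prop1}--\ref{prop3}: for every $x\in Q$ the block $D_1$ shows that $\mathcal{R}_{x|\mathbf{a_k}}$ preserves $\mathbf{a_k}$, these operators mutually commute, and by the hypothesis on the maximality of $\dim Q$ they span a $k$-dimensional family of nil-independent operators on $\mathbf{a_k}$. Simultaneously diagonalising the $D_1$-blocks, and modifying the resulting elements of $Q$ by suitable elements of $N$ to absorb stray $\Ann_r(R)$-components, produces $y_1,\dots,y_k\in R$ satisfying $[f_j,y_i]=\delta_{ij}f_j$ and such that $\mathbf{a_k}\oplus\langle y_1,\dots,y_k\rangle$ is closed under the bracket of $R$. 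The annihilator argument of the previous paragraph then identifies this subalgebra with $\mathcal{L}(-1,\dots,-1)$.

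The main technical obstacle is the closure verification in the last step: I expect to need to show that $[y_i,f_j]$ and $[y_i,y_j]$ have no residual components in $\Ann_r(R)\cap N=\langle e_2,\dots,e_{n-2k},f_{k+1},\dots,f_{2k}\rangle$. This should be controllable by combining the identity $[u,v]+[v,u]\in\Ann_r(R)$ with Proposition \ref{annulator} and Leibniz identities such as $\mathcal{L}(y_i,f_j,y_j)=0$, but it is the only step that is not a one-line consequence of the structural results already in place.
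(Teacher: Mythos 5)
Your argument is essentially identical to the paper's: the paper likewise invokes Theorem \ref{abelian} to reduce to the model $\mathcal{L}(\gamma_i)$ and then forces $\gamma_i=-1$ from $[f_i,x_i]+[x_i,f_i]=(1+\gamma_i)f_i\in\Ann_r(R)$ together with $f_i\notin\Ann_r(R)$ from Proposition \ref{annulator}. The existence/closure step you flag as the ``main technical obstacle'' is not addressed in the paper's proof at all --- it is treated as already subsumed by Theorem \ref{abelian} and the maximality hypothesis --- so your sketch there is, if anything, more careful than the original.
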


\begin{proof} Clearly, $\mathbf{a_k}=\{f_1,f_2,\dots,f_k\}$ forms an abelian subalgebra of $R$. By Theorem \ref{abelian} the maximal solvable Leibniz algebra with nilradical $\mathbf{a_k}$ is isomorphic to $\mathcal{L}(\gamma_i).$
Since $[f_i,x_i]+[x_i,f_i]\in \Ann_r(R)$ with $ 1\leq i\leq k$ and $f_i\notin \Ann_r(R)$ with $ 1\leq i \leq k$, the proof of lemma is complete.
\end{proof}
%

In the following theorem we present the description of algebras of the family $R(\mu_1, k)$.

\begin{thm} \label{thmmu1} An arbitrary algebra of the family $R(\mu_1, k)$ admits a basis such that the non-vanishing Leibniz brackets become:
\[R(\mu_1, k)(a_{i,j},\varphi_{i,j},\delta_{i,j}): \quad \left\{\begin{array}{ll}
[e_i,x_j]=\sum\limits_{t=i+1}^{n-2k}a_{t-i+1,j}e_{t},&1\leq i\leq n-2k, \ 1\leq j\leq k,\\[1mm]
[f_i,x_i]=f_i,&1\leq i\leq k,\\[1mm]
[f_{k+i},x_i]=f_{k+i},&1\leq i\leq k,\\[1mm]
[x_i,f_i]=-f_i,&1\leq i\leq k,\\[1mm]
[x_i,f_j]=\varphi_{i,j}f_{k+j},&1\leq i\neq j\leq k,\\[1mm]
[x_i,x_j]=\delta_{i,j}e_{n-2k},&1\leq i, j\leq k.\\[1mm]
\end{array}\right.\]
\end{thm}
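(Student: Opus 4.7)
The plan is to leverage the explicit derivation description in Proposition~\ref{prop1} for the action $\mathcal{R}_{x_j}$ on the nilradical, then deduce the remaining brackets $[x_j,y]$ and $[x_i,x_j]$ from the Leibniz identity together with Proposition~\ref{annulator}, closing with basis changes that absorb the inessential parameters.

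First, for each $x_j\in Q$ set $\mathbb{D}^{(j)}=\mathcal{R}_{x_j}|_{\mu_1}$ and read off the parameters $a_t^{(j)}$, $b_t^{(j)}$, $c_t^{(j)}$, $D_1^{(j)}$, $D_2^{(j)}$ of Proposition~\ref{prop1}. Exactly as in the proof of Theorem~\ref{thm5}, the identities $\mathcal{L}(x_j,f_1,e_1)=\mathcal{L}(e_1,x_j,e_1)=0$ force $a_1^{(j)}=0$, so the diagonal of $\mathbb{D}^{(j)}$ lies entirely in $D_1^{(j)}$ (since $D_3^{(j)}=D_1^{(j)}+a_1^{(j)}\mathbb{E}=D_1^{(j)}$). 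The $D_1^{(j)}$ pairwise commute on $\mathbf{a_k}=\langle f_1,\dots,f_k\rangle$, and nil-independence of $\{x_1,\dots,x_k\}$ makes the $k\times k$ matrix of their diagonals invertible; a simultaneous basis change in $Q$ and in $\mathbf{a_k}$ therefore reduces to $D_1^{(j)}=e_{j,j}$, giving $[f_i,x_j]=\delta_{i,j}f_i+c_i^{(j)}e_{n-2k}+\sum_l (D_2^{(j)})_{i,l}f_{k+l}$ and $[f_{k+i},x_j]=\delta_{i,j}f_{k+i}$.

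Second, the unknown brackets $[x_j,y]$ are pinned down by the Leibniz identity in sequence: iterating $\mathcal{L}(x_j,e_i,e_1)=0$ yields $[x_j,e_i]=0$ for $i\geq 2$; then $\mathcal{L}(e_1,x_j,f_l)=0$ combined with $\mathcal{L}(x_j,f_l,e_1)=0$ gives $[x_j,f_l]=\gamma^{(j,l)}e_{n-2k}-\delta_{l,j}f_l+\sum_{p>k}\varphi_p^{(j,l)}f_p$; and $\mathcal{L}(x_j,e_1,f_l)=0$ then forces $[x_j,f_{k+l}]=0$ for all $l$. The annihilator conditions $[e_1,x_j]+[x_j,e_1]\in\Ann_r(R)$ and $[f_l,x_j]+[x_j,f_l]\in\Ann_r(R)$ from Proposition~\ref{annulator}, together with the fact that $e_1$ and $f_1,\dots,f_k$ lie outside $\Ann_r(R)$, fix the $e_1$- and $f_l$-coefficients of $[x_j,e_1]$ and $[x_j,f_l]$ to their displayed values, in particular producing $[x_j,f_j]=-f_j$ modulo $\Ann_r(R)$.

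Third, exploit the basis freedom $x_j\mapsto x_j+\sum_l\alpha_{j,l}f_l+\sum_{l\geq 2}\beta_{j,l}e_l$ to kill the $e_t$-tails of $[x_j,e_1]$ with $3\leq t\leq n-2k$ (via $[e_l,e_1]=e_{l+1}$) and the $f_{k+l}$-tails of $[e_1,x_j]$ (via $[e_1,f_l]=f_{k+l}$); the remaining parameters $b_l^{(j)}$ for $l\leq k$, together with $c_l^{(j)}$, $D_2^{(j)}$ and $\gamma^{(j,l)}$, are eliminated by further Leibniz identities such as $\mathcal{L}(e_2,x_j,f_l)=0$ and by the already-established vanishing of $[x_j,f_{k+l}]$. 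What survives in $[x_j,e_1]$ is at most a multiple of $e_{n-2k}$, which is absorbed into $[x_j,x_m]$ via $\mathcal{L}(x_j,x_m,e_1)=0$. Relabelling $a_{t,j}:=a_t^{(j)}$, $\varphi_{i,j}$ as the $f_{k+j}$-coefficient of $[x_i,f_j]$ with $i\neq j$, and $\delta_{i,j}$ as the $e_{n-2k}$-coefficient of $[x_i,x_j]$, one recovers the stated multiplication table.

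The main obstacle will be the bookkeeping: each basis change affects several brackets at once, and the reductions must be ordered so that normalising one bracket does not reintroduce parameters elsewhere. The delicate compatibility step is verifying that the three surviving families $\{a_{i,j},\varphi_{i,j},\delta_{i,j}\}$ satisfy no further relations after all Leibniz identities have been exhausted, which amounts to checking that $\mathcal{L}(x_i,x_j,e_1)=0$ and $\mathcal{L}(e_i,x_j,x_m)=0$ reduce to tautologies modulo these parameters.
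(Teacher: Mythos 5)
Your proposal follows essentially the same route as the paper's proof: read off $\mathcal{R}_{x_j}|_{N}$ from Proposition~\ref{prop1}, normalise the action on $\mathbf{a_k}=\langle f_1,\dots,f_k\rangle$, determine the left brackets $[x_j,\cdot\,]$ from Proposition~\ref{annulator} together with the Leibniz identity, and absorb the remaining inessential parameters by successive basis changes. The one point where your justification is thinner than the paper's is the simultaneous reduction of all $D_1^{(j)}$ to $e_{j,j}$: commutativity plus the nil-independence (invertibility of the matrix of diagonals) only controls the diagonal parts, and killing the off-diagonal nilpotent parts of the $D_1^{(j)}$ uses the maximality of $\dim Q$ via Theorem~\ref{abelian}; this is exactly what Lemma~\ref{maxabelian} supplies, so you should invoke it there rather than bare linear algebra.
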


\begin{proof} According to Propositions \ref{prop1}, \ref{annulator},  Theorem \ref{thm5} and Lemma \ref{maxabelian} we have the following brackets for
$R(\mu_1, k)$:
$$\left\{\begin{array}{ll}
[e_1,x_i]=\sum\limits_{t=2}^{n-2k}a_{t,i}e_t+\sum\limits_{t=1}^{2k}b_{t,i}f_t,&1\leq i\leq k,\\[1mm]
[e_2,x_i]=\sum\limits_{i=3}^{n-2k}a_{t-1,i}e_t+\sum\limits_{t=1}^{k}b_{t,i}f_{k+t},&1\leq i\leq k,\\[1mm]
[e_j,x_i]=\sum\limits_{t=j+1}^{n-2k}a_{t-j+1,i}e_t,&3\leq j\leq n-2k,\\[1mm]
[f_i,x_i]=c_{i,i}e_{n-2k}+f_i+\sum\limits_{t=k+1}^{2k}d_{i,i}^tf_t,&1\leq i\leq k,\\[1mm]
[f_i,x_j]=c_{i,j}e_{n-2k}+\sum\limits_{t=k+1}^{2k}d_{i,j}^tf_t,&1\leq i\neq j\leq k,\\[1mm]
[f_{k+i},x_i]=f_{k+i},&1\leq i\leq k,\\[1mm]
[x_i,e_1]=\sum\limits_{t=2}^{n-2k}\beta_{t,i}e_t-\sum\limits_{t=1}^{k}b_{t,i}f_t+\sum\limits_{t=1}^{k}\beta_{n-k+t,i}f_{k+t},&1\leq i\leq k,\\[1mm]
[x_i,f_i]=\sum\limits_{t=2}^{n-2k}\gamma_{i,i}^te_t-f_i+\sum\limits_{t=k+1}^{2k}\varphi_{i,i}^tf_t,&1\leq i\leq k,\\[1mm]
[x_i,f_j]=\sum\limits_{t=2}^{n-2k}\gamma_{i,j}^te_t+\sum\limits_{t=k+1}^{2k}\varphi_{i,j}^tf_t,&1\leq i\neq j\leq k,\\[1mm]
[x_i,x_j]=\sum\limits_{t=1}^{n-2k}\delta_{i,j}^te_t+\sum\limits_{t=k+1}^{2k}\theta_{i,j}^tf_t,& 1\leq i,j\leq k.\\[1mm]
\end{array}\right.$$

By taking the change of basis
$$\begin{array}{lll}
e_1^\prime=e_1-\sum\limits_{t=1}^{k}b_{t,t}f_{t}, & &
e_2^\prime=e_2-\sum\limits_{t=1}^{k}b_{t,t}f_{k+t},\\[5mm]

f_i^\prime=f_i-\gamma_{i,i}e_{n-2k}-\sum\limits_{t=k+1}^{2k}\varphi_{i,i}^tf_t, & & x_i^\prime=x_i-\sum\limits_{t=1}^{k}\theta_{i,t}^{k+t}f_{k+t}, \quad 1\leq i\leq k,
\end{array}$$
we can assume
$b_{i,i}=\gamma_{i,i}^{n-2k}=\varphi_{i,i}^{k+t}=\theta_{i,t}^{k+t}=0,$ $1\leq i,t\leq k.$

Applying the Leibniz identity, the following relations are obtained
$$\left\{\begin{array}{lll}
{\mathcal L}(x_i,f_j,e_1)=0, & \Rightarrow & \gamma_{i,j}^t=0, \qquad  1\leq i, j\leq k, \qquad
2\leq t\leq n-2k-1,\\[1mm]
{\mathcal L}(e_1,x_i,x_j)=0,& \Rightarrow & b_{i,j}=b_{k+i,j}=\delta_{i,i}^1=\delta_{i,j}^1=0, \qquad 1\leq i\neq j\leq k. \\[1mm]
\end{array}\right.$$

Putting
$e_1^\prime=e_1-\sum\limits_{t=1}^{k}b_{k+t,t}f_{k+t}, \quad x_i^\prime=x_i-\sum\limits_{t=2}^{n-2k}\beta_{t,i}e_{t-1},$
we conclude that $\beta_{t,i}=b_{k+i,i}=0$ with $ 1\leq i\leq k, \ 2\leq t\leq n-2k.$

Considering the Leibniz identity, we obtain  the following restrictions on structure constants:
$$\left\{\begin{array}{llll}
{\mathcal L}(x_i,f_i,x_i)=0, & \Rightarrow & c_{i,i}=d_{i,i}^t=0,&  1\leq i\leq k, \quad k+1\leq t\leq 2k,\\[1mm]
{\mathcal L}(x_i,f_i,x_j)=0,& \Rightarrow &c_{i,j}=d_{i,j}^t=0,& 1\leq i\neq j\leq k, \quad  k+1\leq t\leq 2k, \\[1mm]
{\mathcal L}(x_i,f_j,x_j)=0,& \Rightarrow & \gamma_{i,j}^{n-2k}=\varphi_{i,j}^{k+t}=0,& 1\leq i\neq j\neq t\leq k, \\[1mm]
{\mathcal L}(x_i,e_1,x_j)=0,& \Rightarrow & \delta_{i,j}^t=\beta_{n-k+i,j}=0, & 1\leq i,j\leq k,\quad 2\leq t\leq n-2k-1, \\[1mm]
{\mathcal L}(x_i,x_j,x_s)=0,& \Rightarrow & \theta_{i,j}^{k+s}=0,&  1\leq i,j\neq s\leq k. \\[1mm]
\end{array}\right.$$

\end{proof}

Below the necessary and sufficient conditions of the existence of an isomorphism between two algebras of the family
$R(\mu_1, k)(a_{i,j},\varphi_{i,j},\delta_{i,j})$ are established.

\begin{prop} \label{propmu1} Two algebras $R(\mu_1, k)^\prime(a_{i,j}^\prime,\varphi_{i,j}^\prime,\delta_{i,j}^\prime)$ and $R(\mu_1, k)(a_{i,j},\varphi_{i,j},\delta_{i,j})$ are isomorphic if and only if there exists $A\in \mathbb{C}^*$ such that
$$\begin{array}{llll}
a_{i,j}^\prime=\frac{a_{i,j}}{A^{i-1}}, & 2\leq i\leq n-2k+1, & 1\leq j\leq k,&\\
\varphi_{i,j}^\prime=\frac{\varphi_{i,j}}{A}, &1\leq i\neq
j\leq k,& \delta_{i,j}^\prime=\frac{\delta_{i,j}}{A^{n-2k}},
& 1\leq i,j\leq k.
\end{array}$$

\end{prop}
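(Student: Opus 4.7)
The plan is to prove sufficiency and necessity separately.

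\textbf{Sufficiency.} Given $A\in\mathbb{C}^*$ and parameter sets satisfying the three displayed equalities, I would define $\varphi$ by
\[\varphi(e_i)=A^{-i}e_i,\ \ \varphi(f_j)=f_j,\ \ \varphi(f_{k+j})=A^{-1}f_{k+j},\ \ \varphi(x_i)=x_i \quad (1\le i\le n-2k,\ 1\le j\le k),\]
and verify directly that it is an isomorphism. The scalings on $e_i$ and $f_{k+j}$ are forced by iterating the nilradical relations $[e_i,e_1]=e_{i+1}$ and $[e_1,f_j]=f_{k+j}$, whereas keeping $\varphi(f_j)=f_j$ and $\varphi(x_i)=x_i$ is compatible with the normalizations $[f_i,x_i]=f_i$, $[x_i,f_i]=-f_i$ and $[f_{k+i},x_i]=f_{k+i}$ coming from Theorem \ref{thmmu1}. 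Matching coefficients in $\varphi([e_i,x_j])$, $\varphi([x_i,f_j])$ (with $i\ne j$) and $\varphi([x_i,x_j])$ produces exactly the exponents $i-1$, $1$ and $n-2k$ of $A$ stated.

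\textbf{Necessity.} For the converse, let $\Psi$ be an arbitrary isomorphism between two algebras in the family. I would parametrize $\Psi$ as follows. The nilradical $\mu_1$ being a characteristic ideal, $\Psi$ restricts to an automorphism of $\mu_1$; combined with $[f_j,f_j]=0$ (which rules out an $e_1$-component in $\Psi(f_j)$ since $[e_1,e_1]=e_2\ne 0$) and the natural gradation of $\mu_1$ this yields
\[\Psi(e_1)=\alpha e_1+\sum_{j=1}^k\gamma_j f_j+(\text{annihilator}),\qquad \Psi(f_j)=\sum_{s=1}^k\mu_{j,s}f_s+(\text{annihilator}).\]
Applying $[e_i,e_1]=e_{i+1}$ and $[e_1,f_j]=f_{k+j}$ then determines $\Psi(e_i)$ and $\Psi(f_{k+j})$ in terms of $\alpha$ and the $\mu_{j,s}$; in particular $\Psi(e_{n-2k})=\alpha^{n-2k}e_{n-2k}$ because $e_{n-2k}$ spans $\mu_1^{n-2k}$. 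The maximal abelian-nilradical subalgebra $\langle f_1,\dots,f_k,x_1,\dots,x_k\rangle\cong\mathcal L(-1,\dots,-1)$ supplied by Lemma \ref{maxabelian}, together with Lemma \ref{lemm1}, then forces $(\mu_{j,s})$ to be diagonal and $\Psi(x_i)\equiv \beta_i f_i+x_i$ modulo $\Ann_r(R)$.

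\textbf{Main obstacle.} The delicate part is to verify that, after substituting these parametrizations into the three parameter-carrying relations of Theorem \ref{thmmu1}, the diagonal entries $\mu_{j,j}$, the scalars $\gamma_j$ and $\beta_i$, and all annihilator-component parameters of $\Psi(e_1)$ and $\Psi(x_i)$ cancel out, leaving only the single scalar $\alpha$ in the comparison. This cancellation is in essence the same normalization procedure used in the proof of Theorem \ref{thmmu1} to kill the parameters $b_{i,i}$, $\gamma_{i,i}^{n-2k}$, $\varphi_{i,i}^{k+t}$ and $\theta_{i,t}^{k+t}$; any two canonical-form algebras in the family can therefore only differ by the one remaining degree of freedom $\alpha$, and setting $A:=\alpha^{-1}$ yields the three scaling laws of the statement.
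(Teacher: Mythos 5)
Your proposal follows essentially the same route as the paper: the paper's proof is exactly the necessity computation you outline, namely writing the general change of basis constrained by Lemma \ref{lemm1}, imposing $[e_i^\prime,e_1^\prime]=e_{i+1}^\prime$, $[e_1^\prime,f_i^\prime]=f_{k+i}^\prime$, $[x_i^\prime,e_1^\prime]=0$ and $[x_i^\prime,f_i^\prime]=f_i^\prime$ to kill every free parameter except the leading coefficient $A_1$ of $e_1^\prime$ (your $\alpha$), and then reading the three scaling laws off $[e_1^\prime,x_j^\prime]$, $[x_i^\prime,f_j^\prime]$, $[x_i^\prime,x_j^\prime]$ --- the cancellation you defer in your ``main obstacle'' paragraph is precisely what the paper records via the residual relations $A_1F_{j,j}+B_{k+j}=F_{i,k+i}=F_{i,k+j}+\varphi_{i,j}F_{j,j}=0$, which constrain the leftover parameters without entering the scaling laws. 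Your explicit diagonal map for sufficiency is a harmless supplement (the paper's single computation already gives both directions), with your $A$ playing the role of $A_1^{-1}$.
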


\begin{proof}  Taking into account Lemma \ref{lemm1} we consider the general change of generator basis elements of an algebra from $R(\mu_1, k)(a_{i,j},\varphi_{i,j},\delta_{i,j})$:
$$\begin{array}{ll}
e_1^\prime=\sum\limits_{i=1}^{n-2k}A_{i}e_i+\sum\limits_{i=1}^{2k}B_{i}f_i,&
f_i^\prime=\sum\limits_{j=1}^{n-2k}C_{i,j}e_j+D_{i,i}f_i+\sum\limits_{j=k+1}^{2k}D_{i,j}f_j,
\\
x_i^\prime=\sum\limits_{j=1}^{n-2k}E_{i,j}e_j+F_{i,i}f_i+\sum\limits_{j=k+1}^{2k}F_{i,j}f_j+x_i,& \qquad 1\leq i\leq k.
\end{array}$$

From the following brackets in $R(\mu_1,k)^\prime(a_{i,j}^\prime,\varphi_{i,j}^\prime,\delta_{i,j}^\prime):$
$$[e_i^\prime,e_1^\prime]=e_{i+1}^\prime, \ 1\leq i\leq n-2k-1, \quad [f_i^\prime,e_1^\prime]=0, \quad [e_1^\prime,f_i^\prime]=f_{k+i}^\prime, \ 1\leq i\leq k,$$ we derive
$$\begin{array}{ll}
e_{2}^\prime=A_1\sum\limits_{i=2}^{n-2k}A_{i-1}e_i+A_1\sum\limits_{i=1}^{k}B_{i}f_{k+i}, & e_{i}^\prime=A_1^{i-1}\sum\limits_{j=i}^{n-2k}A_{j-i+1}e_j,\
\ 3\leq i\leq n-2k,\\[5mm]
f_{k+i}^\prime=A_1D_{i,i}f_{k+i},& C_{i,j}=0,   \ \ 1\leq i\leq k,\ \ 1\leq j\leq n-2k-1.
\end{array}$$

The following vanishing parameters $B_i=E_{i,j}=C_{i,n-2k}=D_{i,k+t}=0$ with $ 1\leq i,t\leq k$ and $1\leq j\leq n-2k-1$
have been obtained from the products:
$$[x_i^\prime,e_1^\prime]=0, \quad [x_i^\prime,f_i^\prime]=f_i^\prime, \  1\leq i\leq k.$$

Therefore, we obtain
$$\begin{array}{lll}
e_1^\prime=\sum\limits_{i=1}^{n-2k}A_{i}e_i+\sum\limits_{i=k+1}^{2k}B_{i}f_i,&
e_{i}^\prime=A_1^{i-1}\sum\limits_{j=i}^{n-2k}A_{j-i+1}e_j,&
2\leq i\leq n-2k,\\[2mm]
f_i^\prime=D_{i,i}f_i,\ \ f_{k+i}^\prime=A_1D_{i,i}f_{k+i}, & x_i^\prime=E_{i,n-2k}e_{n-2k}+F_{i,i}f_i+\sum\limits_{j=k+1}^{2k}F_{i,j}f_j+x_i,&\quad 1\leq i\leq k.
\end{array}$$

Let us consider the products
$$[e_1^\prime,x_j^\prime]=\sum\limits_{i=2}^{n-2k}a_{i,j}^\prime e_i^\prime, \quad [x_i^\prime,f_j^\prime]=\varphi_{i,j}^\prime
f_{k+j}^\prime, \quad [x_i^\prime,x_j^\prime]=\delta_{i,j}^\prime
e_{n-2k}^\prime, \ 1\leq i, j\leq k.$$

From which we get the following restrictions:
$$\left\{\begin{array}{ll}
a_{i,j}^\prime=\frac{a_{i,j}}{A_1^{i-1}}, & 2\leq i\leq n-2k, \ \ 1\leq j\leq k,\\[2mm]
\varphi_{i,j}^\prime=\frac{\varphi_{i,j}}{A_1}, & 1\leq i\neq j\leq k,\\[2mm]
\delta_{i,j}^\prime=\frac{\delta_{i,j}}{A_1^{n-2k}}, &   1\leq i,j\leq k,\\[2mm]
\end{array}\right.$$
where $A_1F_{j,j}+B_{k+j}=F_{i,k+i}=F_{i,k+j}+\varphi_{i,j}F_{j,j}=0, \quad  1\leq i\neq j\leq k.$
\end{proof}

Below we describe solvable Leibniz algebras $R(\mu_2 , k)$.

\begin{thm} \label{mu2} An arbitrary algebra of the family $R(\mu_2 , k)$ admits a basis such that its  multiplication table has the following form:
$$R(\mu_2, k)(b_i,\beta_i,\varphi_{i,j},\theta_{i,j}):\quad\left\{\begin{array}{ll}
[e_1,x_1]=f_1+b_{1}f_{k+1},&\\[1mm]
[e_2,x_1]=e_2+f_{k+1},&\\[1mm]
[e_j,x_1]=(j-1)e_j,&3\leq j\leq n-2k,\\[1mm]
[x_1,e_1]=-f_1+\beta_{1}f_{k+1},&\\[1mm]
[e_1,x_i]=b_{i}f_{k+1},&2\leq i\leq k,\\[1mm]
[f_i,x_i]=f_i,&1\leq i\leq k,\\[1mm]
[f_{k+i},x_i]=f_{k+i},&2\leq i\leq k,\\[1mm]
[x_i,e_1]=\beta_{i}f_{k+1},&2\leq i\leq k,\\[1mm]
[x_i,f_i]=-f_i,&1\leq i\leq k,\\[1mm]
[x_i,f_j]=\varphi_{i,j}f_{k+j},&1\leq i\leq k,\ 2\leq j\leq k,\ i\neq j,\\[1mm]
[x_i,x_j]=\theta_{i,j}f_{k+1},&1\leq i,j\leq k.\\[1mm]
\end{array}\right.$$
\end{thm}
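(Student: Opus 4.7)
The plan is to mirror the proof of Theorem \ref{thmmu1}. First I would combine Proposition \ref{prop2}, Proposition \ref{annulator}, Lemma \ref{maxabelian} and Theorem \ref{thm5} to write down the most general brackets compatible with $\mu_2$ as nilradical. Proposition \ref{annulator} places $\{e_2,\dots,e_{n-2k},f_{k+1},\dots,f_{2k}\}\subset\Ann_r(R)$, Lemma \ref{maxabelian} forces $[f_i,x_i]=f_i$ and $[x_i,f_i]=-f_i$ via $\gamma_i=-1$, and Theorem \ref{thm5} gives $\dim Q=k$. The operator $\mathcal{R}_{x_j}|_N$, read off Proposition \ref{prop2}, is controlled by scalars $a_1^{(j)},b_\ell^{(j)},c_\ell^{(j)}$, an upper triangular matrix $D_1^{(j)}$ on $\langle f_1,\dots,f_k\rangle$, and the shifted block $D_3^{(j)}=D_1^{(j)}+a_1^{(j)}\mathbb{E}-\sum_\ell b_\ell^{(j)}e_{1,\ell}$; the identities $\mathcal{L}(x,f_1,e_1)=\mathcal{L}(e_1,x,e_1)=0$ already force $a_1^{(j)}=0$.

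Next I would exploit the proof of Theorem \ref{thm5}, which identifies nilpotency of $\mathcal{R}_{x_j}|_N$ with vanishing of $b_1^{(j)}$ together with the diagonal entries $d_{i,i}^{(j)}$ for $i\ge 2$. Because the operators $\mathcal{R}_{x_i}|_{\mathbf{a_k}}$ commute pairwise, they can be brought simultaneously into a common upper triangular frame; rescaling and recombining the $k$ nil-independent directions then lets me assume $x_1$ carries $b_1^{(1)}=1$ with $d_{i,i}^{(1)}=0$ for $i\ge 2$, while each $x_i$ ($i\ge 2$) has $b_1^{(i)}=0$ and $d_{j,j}^{(i)}=\delta_{i,j}$. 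The extra summand $+f_{k+1}$ in the defining bracket $[e_1,f_1]=e_2+f_{k+1}$ of $\mu_2$ then forces the $x_1$-action to include $[e_2,x_1]=e_2+f_{k+1}$, which is exactly the asymmetric ``$x_1$-row'' appearing in the statement.

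Cleanup follows the pattern of the proof of Theorem \ref{thmmu1}: shifts of the form $e_1'=e_1-\sum b_{t,t}f_t$, $e_2'=e_2-\sum b_{t,t}f_{k+t}$, $f_i'=f_i-\gamma_{i,i}^{n-2k}e_{n-2k}-\sum\varphi_{i,i}^t f_t$ and $x_i'=x_i-\sum\beta_{t,i}e_{t-1}-\sum\theta_{i,t}^{k+t}f_{k+t}$ remove the diagonal $b$-entries, the $e_{n-2k}$ tail of $[x_i,f_i]$, the $e$-tail of $[x_i,e_1]$ and most of $[x_i,x_j]$. The remaining free parameters are then killed by imposing the Leibniz identities on the triples
\[ \mathcal{L}(x_i,f_j,e_1),\ \mathcal{L}(e_1,x_i,x_j),\ \mathcal{L}(x_i,f_i,x_j),\ \mathcal{L}(x_i,f_j,x_j),\ \mathcal{L}(x_i,e_1,x_j),\ \mathcal{L}(x_i,x_j,x_s), \]
which respectively strip the $e$-components of $[x_i,f_j]$, cut $[e_1,x_i]$ down to $b_i f_{k+1}$ for $i\ge 2$, kill the $c_{i,j}$ and $d_{i,j}^t$ coefficients in $[f_i,x_j]$, remove the off-diagonal $f_{k+t}$-components of $[x_i,f_j]$, confine $[x_i,e_1]$ to $\mathbb{C}f_{k+1}$ for $i\ge 2$, and force $[x_i,x_j]\in\mathbb{C}f_{k+1}$.

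The main obstacle I anticipate is the coupling between the $b_1$ direction carried by $x_1$ and the Jordan frame on $\langle f_1,\dots,f_k\rangle$: because the defining relation $[e_1,f_1]=e_2+f_{k+1}$ mixes the $e$- and $f$-blocks, every Leibniz triple involving $(x_1,f_1,\cdot)$ or $(\cdot,x_1,f_1)$ generates cross terms between $b_1^{(1)}$ and the $d_{1,1}^{(i)}$ entries. Verifying that these cross terms are compatible with the normalisation of the previous step is what produces the asymmetric split between the distinguished $x_1$-row (carrying $[e_j,x_1]=(j-1)e_j$ for $j\ge 3$ and $[e_2,x_1]=e_2+f_{k+1}$) and the diagonal $x_i$-family with $i\ge 2$, whose only non-trivial action on the $e$-block is a multiple of $f_{k+1}$. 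Once this bookkeeping is settled, the surviving parameters are exactly $b_i,\beta_i,\varphi_{i,j},\theta_{i,j}$ as asserted.
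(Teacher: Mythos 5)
Your proposal follows essentially the same route as the paper: the authors likewise invoke Propositions \ref{prop2} and \ref{annulator}, Theorem \ref{thm5} and Lemma \ref{maxabelian}, normalise the $k$ nil-independent derivations by sending the diagonal parameters $(b_1,d_{2,2},\dots,d_{k,k})$ to the standard basis vectors (your choice $b_1^{(1)}=1$, $d_{j,j}^{(i)}=\delta_{i,j}$), and then repeat the change-of-basis and Leibniz-identity cleanup of Theorem \ref{thmmu1}. Your write-up simply spells out the details that the paper compresses into ``applying similar arguments as used in the proof of Theorem \ref{thmmu1}.''
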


\begin{proof} The description of $R(\mu_2, k)$ follows from Proposition \ref{prop2}, \ref{annulator}, Theorem \ref{thm5}  and Lemma \ref{maxabelian}. In fact, firstly, we consider derivations of $\mu_2$ and
since the parameters $b_1, d_2,d_3,\dots, d_k$ are in the diagonal, we have only $k$ nil-independent derivations which correspond to the values of $(b_1, d_2,d_3,\dots d_k):$  $(1,0,0,\dots,0), \ (0,1,0,\dots,0),\ \dots, \ (0,0,0,\dots,1)$.
Later, assuming these derivations as $\mathcal{R}_{x_{1}},\ \mathcal{R}_{x_{2}},\ \dots,\ \mathcal{R}_{x_{k}}$ (respectively) we complete the proof by applying similar arguments as used in the proof of Theorem \ref{thmmu1}.
\end{proof}

In the next proposition necessary and sufficient conditions of
the existence of an isomorphism between two algebras of the family
$R(\mu_2, k)(b_i,\beta_i,\varphi_{i,j},\theta_{i,j})$ are established.

\begin{prop} \label{propmu2} Two algebras $R(\mu_2, k)^\prime(b_i^\prime,\beta_i^\prime,\varphi_{i,j}^\prime,\theta_{i,j}^\prime)$ and $R(\mu_2, k)(b_i,\beta_i,\varphi_{i,j},\theta_{i,j})$ are isomorphic if and only if there exists $A\in \mathbb{C}^*$ such that
$$\begin{array}{llll}
b_{i}^\prime=\frac{b_{i}}{A}, & 1\leq i\leq
k,&  \beta_{i}^\prime=\frac{\beta_{i}}{A}, &
1\leq i\leq k,\\[2mm]
\varphi_{1,i}^\prime=\frac{\varphi_{1,i}}{A},
& 2\leq  i\leq k,&
\varphi_{i,j}^\prime=\frac{\varphi_{i,j}}{A}, & 2\leq i\neq
j\leq k,\\ [2mm]
\theta_{i,j}^\prime=\frac{\theta_{i,j}}{A^2}, &
1\leq i,j\leq k.& &
\end{array}$$
\end{prop}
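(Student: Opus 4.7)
The plan is to mirror the proof of Proposition~\ref{propmu1}, adapted to the richer structure of $\mu_2$. Fix two algebras in the family $R(\mu_2,k)$ and seek an isomorphism expressed as a change of generators. By Lemma~\ref{maxabelian} the subalgebra spanned by $\{f_1,\dots,f_k,x_1,\dots,x_k\}$ is isomorphic to $\mathcal{L}(\gamma_i)$ with $\gamma_i=-1$, so Lemma~\ref{lemm1} forces the restriction of the isomorphism to this subalgebra to be essentially diagonal on the $f_i$'s (up to nilradical corrections) and of the shifted form $x_i\mapsto F_{i,i}f_i+x_i$ on the $x_i$'s. I would then write the most general ansatz, allowing all permitted nilradical corrections in $\varphi(e_1)$, $\varphi(f_i)$ and $\varphi(x_i)$, parametrized by scalars $A_i, B_i, C_{i,j}, D_{i,j}, E_{i,j}, F_{i,j}$.

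First, I would apply the $\mu_2$-defining relations in the primed basis. The products $[e_i^\prime,e_1^\prime]=e_{i+1}^\prime$ propagate $\varphi(e_1)$ through the $e$-chain, yielding explicit formulas for $\varphi(e_i)$ in terms of $A_1$, $A_j$ and $B_j$. The products $[e_1^\prime,f_j^\prime]=f_{k+j}^\prime$ for $j\geq2$ identify the images of the $f_{k+j}$ and force $C_{j,\ell}=0$. The delicate step is the coupled relations $[e_1^\prime,f_1^\prime]=e_2^\prime+f_{k+1}^\prime$ together with $[e_i^\prime,f_1^\prime]=e_{i+1}^\prime$ for $i\geq2$: consistency with $[e_i^\prime,e_1^\prime]=e_{i+1}^\prime$ forces $D_{1,1}=A_1$, tying the scaling of $f_1$ to that of $e_1$.

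Second, I would use the $\mathcal{L}(-1)$-subalgebra relations $[f_i^\prime,x_i^\prime]=f_i^\prime$, $[x_i^\prime,f_i^\prime]=-f_i^\prime$ and $[f_{k+i}^\prime,x_i^\prime]=f_{k+i}^\prime$ for $i\geq2$ to eliminate the remaining off-diagonal parameters $B_i$, the $C_{i,j}$'s for $j<n-2k$, the $D_{i,j}$'s with $j>k$, and the $E_{i,j}$'s; after this reduction the isomorphism depends essentially on the single scalar $A:=A_1\in\mathbb{C}^*$. Third, I would evaluate the remaining structure products $[e_1^\prime,x_i^\prime]$, $[x_i^\prime,e_1^\prime]$, $[x_i^\prime,f_j^\prime]$ and $[x_i^\prime,x_j^\prime]$ in the primed basis and match the coefficients of $f_{k+1}^\prime$ and $f_{k+j}^\prime$ — which carry an overall factor of $A\cdot D_{i,i}$, with in particular $f_{k+1}^\prime=A\cdot D_{1,1}f_{k+1}=A^2 f_{k+1}$ — to read off the rescalings $b_i^\prime=b_i/A$, $\beta_i^\prime=\beta_i/A$, $\varphi_{i,j}^\prime=\varphi_{i,j}/A$ and $\theta_{i,j}^\prime=\theta_{i,j}/A^2$. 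The converse direction is immediate: given $A\in\mathbb{C}^*$, the explicit rescaling $e_1\mapsto Ae_1$ (together with the induced cascade on the $e$-chain and on the $f_{k+j}$'s, and $f_1\mapsto Af_1$) realizes the isomorphism.

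The main obstacle is establishing $D_{1,1}=A_1$ in the first step. This is forced by the dual role of $f_1$ in $\mu_2$: on the one hand $f_1$ lies in the abelian piece $\mathbf{a}_k$, which by Lemma~\ref{lemm1} would \emph{a priori} allow an independent scaling; on the other hand $f_1$ acts on the nilpotent $e$-chain via $[e_i,f_1]=e_{i+1}$ and $[e_1,f_1]=e_2+f_{k+1}$, which compels it to scale exactly as $e_1$ does. This coupling — absent in the $\mu_1$ setting — explains why only the single modulus $A$ survives, producing the uniform single-power laws of the proposition rather than the chain-dependent powers $A^{i-1}$ which appeared for $R(\mu_1,k)$.
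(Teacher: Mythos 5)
Your proposal is correct and follows the paper's approach exactly: the paper's own proof of Proposition~\ref{propmu2} consists of the single sentence ``Analogously to the proof of Proposition~\ref{propmu1},'' and your argument is precisely that adaptation, arriving at the correct transformation laws. One small correction to an intermediate step: the chain relations $[e_i^\prime,f_1^\prime]=e_{i+1}^\prime=[e_i^\prime,e_1^\prime]$ force $D_{1,1}=A_1+B_1$ (with $B_1$ the $f_1$-coefficient of $e_1^\prime$) rather than $D_{1,1}=A_1$, and here $B_1$ is not eliminated as it was for $\mu_1$ since $[x_1^\prime,e_1^\prime]=-f_1^\prime+\beta_1^\prime f_{k+1}^\prime\neq 0$; nevertheless the $f_{k+1}$-coefficient of $f_{k+1}^\prime$, read off from $[e_2^\prime,x_1^\prime]=e_2^\prime+f_{k+1}^\prime$, equals $A_1(A_1+B_1)-A_1B_1=A_1^2$, so the powers of $A$ in your final formulas are unaffected.
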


\begin{proof} Analogously to the proof of Proposition \ref{propmu1}.
\end{proof}


To complete the description of solvable Leibniz algebras with the nilradicals $\mu_i, \ i=1, 2, 3$ and maximal complemented space to nilradical, we give the following theorem.

\begin{thm}\label{mu3} An arbitrary algebra of the family $R(\mu_3, k+2)$ admits a basis such that its  multiplication table has the following form:
$$R(\mu_3, k+2):\quad\left\{\begin{array}{ll}
[e_1,y_1]=e_1,&\\[1mm]
[e_j,y_1]=(j-2)e_j,&2\leq j\leq n-2k,\\[1mm]
[y_1,e_1]=-e_1,&\\[1mm]
[e_j,y_2]=e_j,&2\leq j\leq n-2k,\\[1mm]
[f_{k+i},y_2]=f_{k+i},&1\leq i\leq k,\\[1mm]
[f_i,x_i]=f_i,&1\leq i\leq k,\\[1mm]
[f_{k+i},x_i]=f_{k+i},&1\leq i\leq k,\\[1mm]
[x_i,f_i]=-f_{i},&1\leq i\leq k.\\[1mm]
\end{array}\right.$$
\end{thm}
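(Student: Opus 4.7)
The plan is to follow the template of Theorems~\ref{thmmu1} and \ref{mu2}, but in this setting the $k+2$ nil-independent derivations provide enough torus action to eliminate every free parameter, so the family collapses to a single algebra up to isomorphism. First, I would read off the generic multiplication of $R(\mu_3,k+2)$ from Proposition~\ref{prop3}: the diagonal part of $\Der(\mu_3)$ is governed by the scalars $a_1, a_2, d_{1,1},\dots,d_{k,k}$, and by the proof of Theorem~\ref{thm5} these are precisely the $k+2$ parameters which can independently fail to be nilpotent. Choose a basis $\{y_1, y_2, x_1, \dots, x_k\}$ of $Q$ whose right-multiplication operators on $N$ are simultaneously diagonalised, with $\mathcal{R}_{y_1|_N}$ corresponding to $a_1=1$ (all other parameters zero), $\mathcal{R}_{y_2|_N}$ to $a_2=1$, and $\mathcal{R}_{x_i|_N}$ to $d_{i,i}=1$. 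The diagonal entries of the derivation matrix then produce exactly the eight families of products listed in the statement, up to extra contributions from the off-diagonal entries $\beta$, the $a_{j-i+2}$, and the blocks $B$, $C$, $D_2$.

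Next, I would kill those extra contributions in three waves. Proposition~\ref{annulator}, crucially using $a_2\neq 0$, places $e_2,\dots,e_{n-2k}, f_{k+1},\dots,f_{2k}$ in $\Ann_r(R)$, so any right-bracket whose right argument lies in this subspace vanishes; this eliminates all $C$-type contributions and the $D_2$-entries that would otherwise show up in $[y_j,f_i]$ and $[x_j,f_i]$. Lemma~\ref{maxabelian} applied to the abelian subalgebra $\mathbf{a_k}=\langle f_1,\dots,f_k\rangle$ forces the $\{f_i,x_j\}$-block to have the form $\mathcal{L}(\gamma_i)$ with every $\gamma_i=-1$, giving $[f_i,x_i]=f_i$, $[x_i,f_i]=-f_i$, and killing all off-diagonal $[f_i,x_j]$ with $i\neq j$. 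The Leibniz identities $\mathcal{L}(y_j,e_1,y_j)=0$, $\mathcal{L}(y_1,y_2,e_1)=0$, $\mathcal{L}(x_i,f_i,y_j)=0$, $\mathcal{L}(y_j,e_1,x_i)=0$ and analogous triples then eliminate the $[y_j,e_1]$-tails and all $Q\times Q$ brackets apart from $[y_1,e_1]=-e_1$, which is forced by the annihilator condition $[e_1,y_1]+[y_1,e_1]\in\Ann_r(R)$ together with $e_1\notin\Ann_r(R)$.

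Finally, cosmetic shifts of the generators of the form $e_1\mapsto e_1 + (\text{tail in the }e_j\text{ and }f_i)$, $y_j\mapsto y_j + (\text{nilpotent shift})$, $x_i\mapsto x_i + (\text{nilpotent shift})$ absorb any constants still coming from the off-diagonal entries of $A$ and $B$; this is the same kind of normalisation used in the last step of the proofs of Theorems~\ref{thmmu1} and \ref{mu2}. The main obstacle I expect is the book-keeping: because there are $k+2$ non-nilpotent generators acting on a nilradical of dimension $n-p$ with a rich weight structure, there are many potentially non-vanishing structure constants. The clean way to manage this is to track, for each basis vector, its weight under the simultaneous diagonal action of $(\mathcal{R}_{y_1},\mathcal{R}_{y_2},\mathcal{R}_{x_i})$ and observe that the weights occurring on the left-hand sides of the undetermined brackets do not match any weight present in the relevant target subspace; every remaining mismatch-free coefficient is then absorbed by a weight-preserving change of basis. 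Once this weight analysis is set up, the rest is routine verification of the Leibniz identity on the triples listed above.
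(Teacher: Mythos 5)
Your proposal is correct and follows essentially the route the paper intends: the paper actually states Theorem~\ref{mu3} without a written proof, implicitly appealing to the same chain of tools (Proposition~\ref{prop3} for the generic brackets, Proposition~\ref{annulator} with $a_2\neq 0$, Theorem~\ref{thm5}, Lemma~\ref{maxabelian}, followed by Leibniz identities and basis shifts) that it spells out for Theorems~\ref{thmmu1} and~\ref{mu2}. Your weight bookkeeping under the $(k+2)$-dimensional toral action is a sound way to organize why, unlike in the $\mu_1$ and $\mu_2$ cases, no free parameters survive (no basis vector of the nilradical has weight zero, so in particular $[Q,Q]=0$).
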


\begin{rem} If in the Theorem \ref{mu3} $k=0$, then we obtain solvable Leibniz algebras with  non-split
naturally graded filiform Leibniz algebra nilradical. Such Leibniz algebra was studied in the work \cite[Theorem 1]{Ladra}.

\end{rem}

\section{Rigidity of the algebra $R(\mu_3,k+2)$.}

Finally, we study the rigidity of the obtained algebras by using triviality of the second cohomology group.

In order to simplify further calculations for the algebra $R(\mu_3,k+2)$, by taking
\[e_{i}^\prime=e_{i+1},  \ 1\leq i\leq n-2k-1, \quad e_{n-2k-1+i}^\prime=f_{k+i},  \ 1\leq i\leq k,\]
\[f_1^\prime=e_1, \quad f_{i}^\prime=f_{i-1},  \ 2\leq i\leq k+1,\]
 \[y_1^\prime=y_1, \quad y_{i}^\prime=x_{i-1},  \ 2\leq i\leq k+1, \quad y_{k+2}^\prime=y_2,\]
we obtain the table of multiplication of the algebras $R(\mu_3,k+2)$ in the following form:
$$\label{ri1}\left\{\begin{array}{ll}
[e_i,f_1]=e_{i+1},& 1\leq i\leq n-2k-2,\\[1mm]
[e_1,f_i]=e_{n-2k+i-2},& 2\leq i\leq k+1,\\[1mm]
[f_i,y_i]=-[y_i,f_i]=f_i,& 1\leq i\leq k+1,\\[1mm]
[e_i,y_{k+2}]=e_i,& 1\leq i\leq n-k-1,\\[1mm]
[e_i,y_{1}]=(i-1)e_i,& 1\leq i\leq n-2k-1,\\[1mm]
[e_{n-2k+i-2},y_{i}]=e_{n-2k+i-2},& 2\leq i\leq k+1.\\[1mm]
\end{array}\right.$$

Moreover, substituting instead $n-k-1$ to $n$ and $k+1$ instead $k$ we obtain
$$\label{ri1} R(\mu_3,k+2)\cong R_n:\left\{\begin{array}{ll}
[e_i,f_1]=e_{i+1},& 1\leq i\leq n-k,\\[1mm]
[e_1,f_i]=e_{n-k+i},& 2\leq i\leq k,\\[1mm]
[f_i,y_i]=-[y_i,f_i]=f_i,& 1\leq i\leq k,\\[1mm]
[e_i,y_{k+1}]=e_i,& 1\leq i\leq n,\\[1mm]
[e_i,y_{1}]=(i-1)e_i,& 1\leq i\leq n-k+1,\\[1mm]
[e_{n-k+i},y_{i}]=e_{n-k+i},& 2\leq i\leq k.\\[1mm]
\end{array}\right.$$

Since $<e_{m+1}, \dots, e_{n-k}>$ with $1\leq m\leq n-k+1$ and $<e_{n-k+m+1}, \dots, e_{n}>$ with $2\leq m\leq k$ form ideals of the algebra $R_n$, we consider the quotient algebras $R_m=R_n/<e_{m+1}, \dots, e_{n-k}>$ and
$R_{n-k+m}=R_n/<e_{n-k+m+1}, \dots, e_{n}>$, which have the following table of multiplications:
$$\label{ri1}R_m:\left\{\begin{array}{ll}
[e_i,f_1]=e_{i+1},& 1\leq i\leq m-1,\\[1mm]
[f_i,y_i]=-[y_i,f_i]=f_i,& 1\leq i\leq k,\\[1mm]
[e_i,y_{k+1}]=e_i,& 1\leq i\leq m,\\[1mm]
[e_i,y_{1}]=(i-1)e_i,& 1\leq i\leq m\\[1mm]
\end{array}\right.$$

$$\label{ri1}R_{n-k+m}:\left\{\begin{array}{ll}
[e_i,f_1]=e_{i+1},& 1\leq i\leq n-k,\\[1mm]
[e_1,f_i]=e_{n-k+i},& 2\leq i\leq m,\\[1mm]
[f_i,y_i]=-[y_i,f_i]=f_i,& 1\leq i\leq k,\\[1mm]
[e_i,y_{k+1}]=e_i,& 1\leq i\leq n-k+m,\\[1mm]
[e_i,y_{1}]=(i-1)e_i,& 1\leq i\leq n-k+1,\\[1mm]
[e_{n-k+i},y_{i}]=e_{n-k+i},& 2\leq i\leq m.\\[1mm]
\end{array}\right.$$

\begin{prop}  Any derivation $d$ of the algebra from $R_n$ has the following form:
\[\left\{\begin{array}{ll}
d(e_1)=ae_1-c_1e_2-\sum\limits_{i=2}^{k}c_ie_{n-k+i},&\\[1mm]
d(e_i)=(a+(i-1)b_1)e_i-c_1e_{i+1},&2\leq i\leq n-k,\\[1mm]
d(e_{n-k+1})=(a+(n-k)b_1)e_{n-k+1},&\\[1mm]
d(e_{n-k+i})=(a+b_i)e_{n-k+i},&2\leq i\leq k,\\[1mm]
d(f_i)=b_if_i,&1\leq i\leq k,\\[1mm]
d(y_i)=c_if_i,&1\leq i\leq k.\\[1mm]
\end{array}\right.\]
\end{prop}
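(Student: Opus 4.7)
The plan is to start with a fully general derivation and systematically reduce its form by applying the derivation identity to each of the defining brackets of $R_n$. Write
\[
d(e_i) = \sum_j \alpha^{(i)}_j e_j + \sum_j \beta^{(i)}_j f_j + \sum_j \gamma^{(i)}_j y_j,
\]
and analogous expansions for $d(f_i)$ and $d(y_i)$. The key structural feature I would exploit is that the $y_j$'s act via commuting, simultaneously diagonalizable right-multiplication operators on the span of the $e_i$'s and $f_i$'s, so eigenvalue (weight) bookkeeping will pin most coefficients.

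First I would extract constraints from the simple eigen-relations. From $[e_i,y_1]=(i-1)e_i$ together with $[e_i,y_{k+1}]=e_i$, and the fact that the joint $(y_1,y_{k+1})$-weight space containing $e_i$ is one-dimensional (for $1\le i\le n-k+1$), the derivation identity forces the image $d(e_i)$ to lie in the same weight space up to corrections coming from $d(y_1),d(y_{k+1})$. Similarly, $[f_i,y_i]=f_i=-[y_i,f_i]$ for $1\le i\le k$ constrains $d(f_i)$ to be parallel to $f_i$ (introducing the parameter $b_i$) and pins the $f$-component of $d(y_i)$ to be a scalar multiple of $f_i$ (introducing $c_i$). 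The off-diagonal components of $d(f_i)$ along $e$'s, other $f$'s, or $y$'s are then ruled out using the mixed brackets $[f_i,f_j]=0$, $[f_i,y_j]=0$ for $j\ne i$, and $[f_i,e_1]=0$.

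Second I would exploit the recursion $[e_i,f_1]=e_{i+1}$ for $1\le i\le n-k-1$. Applying $d$ and using $d(f_1)=b_1f_1$ gives
\[
d(e_{i+1}) = [d(e_i),f_1] + b_1\,e_{i+1},
\]
which, starting from a general $d(e_1)=a\,e_1+(\text{other terms})$, propagates the eigenvalue $a+(i-1)b_1$ along $e_i$ and a single off-diagonal term $-c_1\,e_{i+1}$ coming from the $e_2$-component of $d(e_1)$; simultaneously it forces every $f$- and $y$-component of $d(e_1)$ to vanish, since the shift operator $\mathcal{R}_{f_1}$ would otherwise produce impossible contributions. An entirely analogous application of $d$ to $[e_1,f_i]=e_{n-k+i}$ for $2\le i\le k$ determines $d(e_{n-k+i})$ as $(a+b_i)e_{n-k+i}$ and identifies the $e_{n-k+i}$-coefficient of $d(e_1)$ with the same constant $-c_i$ that appears in $d(y_i)$, via the identity applied to $[e_{n-k+i},y_i]=e_{n-k+i}$.

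The main obstacle I anticipate is not conceptual but organizational: there are many index cases and many small Leibniz-identity checks (in particular verifying that all stray $y$-components of $d(e_i)$, $d(f_i)$, and cross $e$/$f$ components of $d(y_i)$ vanish via brackets such as $[e_1,y_j]=0$ for $j\notin\{1,k+1\}$ and $[f_i,y_j]=0$ for $i\ne j$). I expect each of these to collapse the corresponding coefficient immediately, so the whole proof should reduce to a careful, case-by-case substitution verifying that the remaining free parameters are exactly $a$, $b_1,\dots,b_k$, $c_1,\dots,c_k$ and appear in the places dictated by the statement.
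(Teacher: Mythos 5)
Your proposal is correct and matches the paper's proof, which is simply a direct verification of the derivation property on all defining brackets of $R_n$ (the paper gives no further detail); your weight-space bookkeeping and the recursion via $\mathcal{R}_{f_1}$ are just a sensible organization of that same computation. The resulting parameters $a,b_i,c_i$ you identify agree with the stated form and, consistently with the subsequent corollary, correspond exactly to the inner derivations $\mathcal{R}_{y_{k+1}},\mathcal{R}_{y_i},\mathcal{R}_{f_i}$.
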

\begin{proof} The proof is carrying out by straightforward verification of derivation property \eqref{eq0}.
\end{proof}

\begin{cor} The solvable Leibniz algebra $R(\mu_3,k+2)$ is complete.
\end{cor}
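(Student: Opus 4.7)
To show that $R(\mu_3,k+2)\cong R_n$ is complete, I must verify that (i) $\Center(R_n)=0$ and (ii) every derivation of $R_n$ is inner. The classification of derivations in the preceding proposition makes this a concrete, finite-dimensional bookkeeping task: a derivation is determined by $2k+1$ scalars $a,b_1,\dots,b_k,c_1,\dots,c_k$, so it suffices to match this data with an element of $R_n$ producing the same right-multiplication.

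For the inner-derivation step, the plan is to exhibit the candidate element explicitly. Looking at how the scalars appear in $d$, the $y_{k+1}$-action scales all $e_i$ by $a$, the $y_1$-action scales $e_i$ by $(i-1)b_1$ and $f_1$ by $b_1$, each $y_j$ ($j\geq 2$) produces the $b_j$-contribution on $e_{n-k+j}$ and $f_j$, and $f_1,\dots,f_k$ sit in the right annihilator of everything except the $y_i$'s. This suggests the candidate
\[
x \;=\; ay_{k+1}+\sum_{j=1}^{k}b_jy_j-\sum_{j=1}^{k}c_jf_j.
\]
I would then compute $[v,x]$ for $v$ running through the basis $\{e_1,\dots,e_n,f_1,\dots,f_k,y_1,\dots,y_{k+1}\}$ and check term by term that it reproduces $d(v)$ from the previous proposition. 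The crucial bracket identities to invoke are $[e_1,f_1]=e_2$, $[e_1,f_i]=e_{n-k+i}$, $[e_i,y_1]=(i-1)e_i$, $[e_i,y_{k+1}]=e_i$, $[e_{n-k+i},y_i]=e_{n-k+i}$, $[f_i,y_i]=f_i$, and $[y_i,f_i]=-f_i$; all other brackets are zero, which is what forces the $-c_j$ (rather than $+c_j$) sign in front of $f_j$ (since $d(y_i)=c_if_i$ requires $[y_i,x]=-c_i[y_i,f_i]=c_if_i$).

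For the trivial-center step, I would write a generic $z=\sum\alpha_ie_i+\sum\beta_if_i+\sum\gamma_iy_i$ and kill the coefficients by testing the conditions $[v,z]=0$ and $[z,v]=0$ on well-chosen $v$. Computing $[e_1,z]$ isolates $\beta_1 e_2+\sum_{i\geq 2}\beta_ie_{n-k+i}+\gamma_{k+1}e_1$, killing all $\beta_i$ and $\gamma_{k+1}$. Then $[f_i,z]=\gamma_if_i$ kills $\gamma_1,\dots,\gamma_k$. Finally, $[z,f_1]=\sum_{i=1}^{n-k}\alpha_ie_{i+1}$ kills $\alpha_1,\dots,\alpha_{n-k}$, $[z,y_1]$ kills $\alpha_{n-k+1}$, and each $[z,y_j]$ for $2\leq j\leq k$ kills $\alpha_{n-k+j}$, leaving $z=0$.

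The only genuine obstacle is indexing discipline: there are three distinct index ranges for the $e_i$'s (namely $i=1$, $2\leq i\leq n-k$ with its $f_1$-shift, $i=n-k+1$ where the shift dies, and $i=n-k+j$ for $j\geq 2$), and one must check each range against the formula for $d(e_i)$. Once this case-split is carried out, both (i) and (ii) reduce to direct verification, and completeness follows.
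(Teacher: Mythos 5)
Your proposal is correct and follows essentially the same route as the paper: the paper asserts that every derivation of $R_n$ is a linear combination of $\mathcal{R}_{f_1},\dots,\mathcal{R}_{f_k},\mathcal{R}_{y_1},\dots,\mathcal{R}_{y_{k+1}}$ and that $\Center(R_n)=0$ by inspection of the multiplication table, which is exactly what your explicit element $x=ay_{k+1}+\sum_{j}b_jy_j-\sum_{j}c_jf_j$ and your basis-by-basis center computation verify. Your version simply spells out the "straightforward verifications" the paper leaves implicit, and the candidate $x$ does reproduce $d$ on every basis vector, including the boundary cases $e_{n-k+1}$ and $e_{n-k+i}$, $2\leq i\leq k$.
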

\begin{proof} The straightforward verifications show that the above derivations of the algebra $R_n$ are linear combinations of $\mathcal{R}_{f_{1}}, \ \mathcal{R}_{f_{2}}, \ \dots, \ \mathcal{R}_{f_{k}}$ and $\mathcal{R}_{y_{1}}, \ \mathcal{R}_{y_{2}}, \ \dots, \ \mathcal{R}_{y_{k+1}}.$ Taking into account the structure of the algebra $R_n$ we get $\Center(R_n)=0.$ The fact that $R(\mu_3,k+2)\cong R_n$ complete the proof of corollary.
\end{proof}

Now we are going to calculate the dimension of $HL^2(R_n,R_n)$.

Let $R$ be an arbitrary Leibniz algebra such that exists an ideal $J$ of the algebra $R$ with the property: $HL^2(L,L)=0$ for the quotient algebra $L=R/J$.

Assuming that $R=L\oplus J$ is a direct sum of the vector spaces $L$ and $J.$ We consider $\varphi\in ZL^2(R,R)$,
then $\varphi$ can be decomposed in the form:
$$\varphi=\varphi_{L,L}^{L}+\varphi_{L,L}^{J}+\varphi_{L,J}^{L}+\varphi_{L,J}^{J}+\varphi_{J,L}^{L}+
\varphi_{J,L}^{J}+\varphi_{J,J}^{L}+\varphi_{J,J}^{J},$$
where
$$\varphi_{A,B}^{C}: A\otimes B \rightarrow C,\quad A,B,C \in \{L,J\}.$$

From equalities $\Phi(\varphi)(x, y, z)=0$ with $x,y,z \in L$ we get
\begin{equation}\label{eq1}proj_{L}\Phi(\varphi_{L,L}^{L})(x,y,z)=0, \quad proj_{J}\Phi(\varphi_{L,L}^{L})(x,y,z)+\Phi(\varphi_{L,L}^{J})(x,y,z)=0.\end{equation}
From (\ref{eq1}) we conclude $\varphi_{L,L}^{L} \in ZL^2(L,L)$. Thus,
\begin{equation}\label{eq2}
ZL^2(R,R)=ZL^2(L,L)+\overline{ZL^2(L,L)},
\end{equation}
where $\overline{ZL^2(L,L)}$ is the complementary space to $ZL^2(L,L)$.

Applying the same arguments one can obtain
\begin{equation*}\label{eq3}
BL^2(R,R)=BL^2(L,L)+\overline{BL^2(L,L)},
\end{equation*}
where $\overline{BL^2(L,L)}$ is the complementary space to $BL^2(L,L)$.

Therefore, the equality $dim\overline{ZL^2(L,L)}=dim\overline{BL^2(L,L)}$ implies $HL^2(R,R)=0$.

Note that the algebra $R_m$ with $1\leq m\leq n-k+1$ is a solvable Leibniz algebra with $p$-filiform nilradical, which is a direct sum of $(m+1)$-dimensional filiform Leibniz algebra with the products $[e_i,f_1]=e_{i+1}, \ 1\leq i\leq m-1$ and $\mathbb{C}^{p-1}$. Moreover, the solvable Leibniz algebra $R_{n-k+m}$ has the same structure as $R_n$ but the dimension is less than $k-m$, that is, $R_{n-k+m}$ is a solvable Leibniz algebra with $(n+m)$-dimensional nilradical (which is isomorphic to $(n+m)$-dimensional algebra $\mu_3$) and $(k+1)$-dimensional complemented space to nilradical.

The triviality of $HL^2(R_n, R_n)$ we shall prove in two steps. First we shall prove that $HL^2(R_m,R_m)=0$ for any $1\leq m\leq n-k+1$, then using this result we shall prove that $HL^2(R_{n-k+m},R_{n-k+m})=0$ for any $2\leq m\leq k$. In both cases we shall use the induction method. The first step follows from the results of paper \cite{Adashev}, where it is proved the triviality of the second cohomology group of the quotient algebra $R_1=\{e_1,f_i,y_i,y_{k+1}\},$ with $1\leq i\leq k$.

Let us assume that $HL^2(R_m,R_m)=0$ for $1\leq m\leq n-k$. Then it is easy to see that $R_{m+1}=R_m\oplus J_{m+1}$ with
the ideal $J_{m+1}=<e_{m+1}>$ and $R_m\simeq R_{m+1}/J_{m+1}.$

Taking into account the equality \eqref{eq2} for $\varphi \in \overline{ZL^2(R_m, R_m)}$ we have

$$\varphi(R_m,R_m)\subseteq J_{m+1}, \ \varphi(R_m,J_{m+1}), \ \varphi(J_{m+1},R_m), \ \varphi(J_{m+1},J_{m+1})\subseteq R_{m+1}.$$

\begin{prop} \label{prop53} The following cochains:

$$\begin{array}{ll}
\varphi_i(e_i,f_1)=e_{m+1},&1\leq i\leq m,\\[1mm]
\psi(e_1,y_1)=e_{m+1}, &\\[1mm]
\phi_i(y_i,f_i)=e_{m+1},&1\leq i\leq k,\\[1mm]
\chi_i(y_i,y_{k+1})=e_{m+1}, &1\leq i\leq k+1,\\[1mm]
\end{array}$$
form a basis of spaces $\overline{ZL^2(R_m, R_m)}$ and $\overline{BL^2(R_m, R_m)}, \ 1\leq m\leq n-k$. 

\end{prop}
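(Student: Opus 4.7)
The plan is to exploit the vector space decomposition $R_{m+1} = R_m \oplus J_{m+1}$ with the one-dimensional ideal $J_{m+1} = \langle e_{m+1}\rangle$ and the quotient isomorphism $R_m \simeq R_{m+1}/J_{m+1}$, together with the inductive hypothesis $HL^2(R_m, R_m) = 0$ established earlier in the argument. Using the cochain splitting $\varphi = \sum_{A,B,C \in \{L,J\}} \varphi_{A,B}^C$ recalled just before the proposition, the hypothesis $HL^2(R_m, R_m) = 0$ allows one to normalize $\varphi_{L,L}^L = 0$ modulo a coboundary; hence $\overline{ZL^2(R_m, R_m)}$ consists of 2-cocycles $\varphi \in ZL^2(R_{m+1}, R_{m+1})$ whose $(L,L,L)$-component vanishes, and the task is to classify these and show each is also a coboundary.

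First I would analyze the $\varphi_{L,L}^J$ component, which takes values in $\langle e_{m+1}\rangle$ on pairs of basis elements from $R_m$. Writing such a cochain with unknown coefficients on every basis pair and imposing the identity $\Phi(\varphi)(x,y,z) = 0$ for triples $(x,y,z)$ of basis elements of $R_m$ using the sparse multiplication table of $R_m$, a methodical case split according to the three basis types ($e_i$'s, $f_i$'s, $y_i$'s) in each slot shows that the surviving non-zero values occur exactly on the pairs $(e_i, f_1)$, $(e_1, y_1)$, $(y_i, f_i)$, and $(y_i, y_{k+1})$ listed in the statement, producing the cochains $\varphi_i, \psi, \phi_i, \chi_i$. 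Linear independence is then immediate since each has a distinct support pair.

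Next I would show that the remaining components $\varphi_{L,J}^L, \varphi_{L,J}^J, \varphi_{J,L}^L, \varphi_{J,L}^J, \varphi_{J,J}^L, \varphi_{J,J}^J$ are either forced to vanish by the cocycle identity or are coboundaries of 1-cochains supported on $e_{m+1}$, and so can be absorbed without altering the class in $\overline{ZL^2}$. The one-dimensionality of $J_{m+1}$ together with the very sparse bracket pattern of $e_{m+1}$ in $R_{m+1}$ (only $[e_{m+1}, y_{k+1}] = e_{m+1}$ and $[e_{m+1}, y_1] = m\,e_{m+1}$ are non-zero) drastically restricts the possibilities in these components.

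Finally, for the coboundary statement I would exhibit explicit 1-cochains whose Leibniz coboundaries $\psi_d(x,y) = [d(x),y]+[x,d(y)]-d([x,y])$ realize each listed cochain: for $\varphi_i$ take $d(e_{i+1}) = -e_{m+1}$ adjusted on higher $e_j$ to cancel spurious terms; for $\psi$ take $d(e_1) \in \langle e_{m+1}\rangle$; for $\phi_i$ take $d(f_i) \in \langle e_{m+1}\rangle$; for $\chi_i$ take $d(y_i) \in \langle e_{m+1}\rangle$. This shows the same list spans both $\overline{ZL^2(R_m, R_m)}$ and $\overline{BL^2(R_m, R_m)}$, so the two dimensions agree and the inductive step yielding $HL^2(R_{m+1}, R_{m+1}) = 0$ is complete. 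The main obstacle is the bookkeeping in the classification step: exhausting the identity $\Phi(\varphi) = 0$ over all basis triples of $R_{m+1}$ and verifying that every component with at least one $J$-index can be normalized away by a coboundary, leaving exactly the stated basis.
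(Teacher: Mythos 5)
Your proposal follows essentially the same route as the paper: impose the cocycle identity $\Phi(\varphi)(x,y,z)=0$ on basis triples of $R_{m+1}$ to reduce every value of $\varphi$ to the free parameters $\varphi(e_i,f_1)$, $\varphi(e_1,y_1)$, $\varphi(y_i,f_i)$, $\varphi(y_i,y_{k+1})$ lying in $J_{m+1}$, and then check that $2$-coboundaries satisfy the same relations and realize every choice of these parameters. Your explicit $1$-cochains for the coboundary direction spell out a step the paper only asserts, but the strategy and the resulting basis are identical.
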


\begin{proof} For an arbitrary $\varphi\in \overline{ZL^2(R_m, R_m)}$ by straightforward calculations of equations (\ref{eq4}) on the basis elements of the algebra $R_m$ we derive the following:
\begin{equation}\label{eq1111} \left\{\begin{array}{ll}
\varphi(e_i,y_1)=(i-1-m)\varphi(e_{i-1},f_1),&2\leq i\leq m,\\[1mm]
\varphi(f_1,y_1)=(m-1)\varphi(y_1,f_1),&\\[1mm]
\varphi(f_i,y_1)=m\varphi(y_i,f_i),&2\leq i\leq k,\\[1mm]
\varphi(f_i,y_i)=-\varphi(y_i,f_i),&2\leq i\leq k,\\[1mm]
\varphi(f_i,y_{k+1})=\varphi(y_i,f_i),&1\leq i\leq k,\\[1mm]
\varphi(y_i,y_{1})=m\varphi(y_i,y_{k+1}),&1\leq i\leq k+1\\[1mm]
\end{array}\right.
\end{equation}
with $\{\varphi(e_i,f_1), \varphi(e_1,y_1), \varphi(y_i,f_i), \varphi(y_i,y_{k+1})\}\subseteq J_{m+1}$.

If $\varphi\in \overline{BL^2(R_m, R_m)}$, then substituting to the equality $\varphi(x,y)=[d(x),y]+[x,d(y)]-d([x,y])$ various values of  $x,y\in R_{m+1}$ and for some $d\in C^1(R_{m+1},R_{m+1})$, we get the relations (\ref{eq1111}) on 2-coboundaries $\varphi\in \overline{BL^2(R_m, R_m)}$.
\end{proof}

\begin{cor} \label{cor2} The following holds $$\overline{ZL^2(R_m, R_m)}=\overline{BL^2(R_m, R_m)}.$$
\end{cor}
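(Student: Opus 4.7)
The plan is to derive the corollary directly from Proposition~\ref{prop53}. That proposition exhibits one and the same family of cochains $\{\varphi_i\}_{1\le i\le m}$, $\psi$, $\{\phi_i\}_{1\le i\le k}$, $\{\chi_i\}_{1\le i\le k+1}$ as a basis of each of the two complementary spaces $\overline{ZL^2(R_m,R_m)}$ and $\overline{BL^2(R_m,R_m)}$. Since two linear subspaces of a common ambient space that share a basis must coincide, the desired equality is immediate.

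To spell it out: the basis has cardinality $m+1+k+(k+1)=m+2k+2$, so both complementary spaces have this same dimension. Combining this with the general inclusion $BL^2(R_{m+1},R_{m+1})\subseteq ZL^2(R_{m+1},R_{m+1})$ and a compatible choice of complements—namely the subspace of cochains whose image lies in the one-dimensional ideal $J_{m+1}=\langle e_{m+1}\rangle$—yields $\overline{BL^2(R_m,R_m)}\subseteq \overline{ZL^2(R_m,R_m)}$, and equality of finite dimensions forces equality of the subspaces.

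The only nontrivial step, which is already completed inside the proof of Proposition~\ref{prop53}, is verifying that each of $\varphi_i,\psi,\phi_i,\chi_i$ is genuinely a coboundary. This amounts to exhibiting, case by case, a linear map $d\in C^1(R_{m+1},R_{m+1})$ whose coboundary reproduces the given cochain modulo elements of $BL^2(R_m,R_m)$; for instance, for $\varphi_i$ one may take $d(e_{i+1})=-e_{m+1}$ and $d\equiv 0$ on the remaining basis vectors, and check that $\delta d$ satisfies $\delta d(e_i,f_1)=e_{m+1}$ together with the relations \eqref{eq1111}. Analogous explicit choices handle $\psi,\phi_i,\chi_i$. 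Linear independence of the resulting coboundaries modulo $BL^2(R_m,R_m)$ is automatic, since they are distinguished by their values on pairwise distinct argument pairs, all landing in the one-dimensional space $J_{m+1}$.

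The main obstacle would normally be showing $\dim \overline{BL^2(R_m,R_m)} \ge \dim \overline{ZL^2(R_m,R_m)}$ (the reverse inequality being free from $BL^2\subseteq ZL^2$), but this has already been absorbed into the basis statement of Proposition~\ref{prop53}; consequently the corollary reduces to a one-line comparison of bases.
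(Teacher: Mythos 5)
Your proposal is correct and follows the same route as the paper: the corollary is an immediate consequence of Proposition~\ref{prop53}, which exhibits a single family of cochains as a basis of both $\overline{ZL^2(R_m,R_m)}$ and $\overline{BL^2(R_m,R_m)}$ (equivalently, the relations \eqref{eq1111} hold for cocycles and coboundaries alike, so both complements are cut out by the same free parameters). The extra verification you sketch for the coboundary side is exactly what the proof of that proposition already supplies, so nothing further is needed.
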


From Corollary \ref{cor2} we obtain the following result.

\begin{thm} \label{thm57} The algebra $R_m$ is cohomologically rigid for any values of $m, \ 1\leq m\leq n-k+1.$
\end{thm}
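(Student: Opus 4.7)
The plan is to prove Theorem \ref{thm57} by induction on $m$, leveraging the decomposition machinery already developed in equations (\ref{eq2}) and (\ref{eq1}) together with Corollary \ref{cor2}.

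For the base case $m=1$, the algebra $R_1 = \langle e_1, f_i, y_i, y_{k+1} \mid 1 \leq i \leq k\rangle$ has already been shown to satisfy $HL^2(R_1, R_1)=0$ in the cited paper \cite{Adashev}, as noted in the discussion preceding Proposition \ref{prop53}. So I would simply invoke this result to anchor the induction.

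For the inductive step, assume $HL^2(R_m, R_m) = 0$ for some $m$ with $1 \leq m \leq n-k$, and I would show $HL^2(R_{m+1}, R_{m+1}) = 0$. The key observation, recorded right before Proposition \ref{prop53}, is that $R_{m+1} = R_m \oplus J_{m+1}$ as a vector space with $J_{m+1} = \langle e_{m+1}\rangle$ an ideal and $R_m \cong R_{m+1}/J_{m+1}$. Applying the general splitting argument culminating in equation \eqref{eq2} (with $L=R_m$, $J=J_{m+1}$, and $R=R_{m+1}$), we obtain
\[
ZL^2(R_{m+1}, R_{m+1}) = ZL^2(R_m, R_m) + \overline{ZL^2(R_m, R_m)},
\]
and analogously
\[
BL^2(R_{m+1}, R_{m+1}) = BL^2(R_m, R_m) + \overline{BL^2(R_m, R_m)}.
\]
By the inductive hypothesis $ZL^2(R_m, R_m) = BL^2(R_m, R_m)$, and by Corollary \ref{cor2} the complementary pieces satisfy $\overline{ZL^2(R_m, R_m)} = \overline{BL^2(R_m, R_m)}$. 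Summing yields $ZL^2(R_{m+1}, R_{m+1}) = BL^2(R_{m+1}, R_{m+1})$, hence $HL^2(R_{m+1}, R_{m+1}) = 0$.

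The main subtlety, and really the only non-formal point, lies in verifying that the decomposition statement \eqref{eq2} (together with its $BL^2$-analogue) genuinely applies in this setting: one must check that the complementary spaces $\overline{ZL^2(R_m, R_m)}$ and $\overline{BL^2(R_m, R_m)}$ appearing in the splitting for $R_{m+1}$ are exactly the ones described in Proposition \ref{prop53}. This is transparent from the fact that $J_{m+1}$ is one-dimensional, generated by $e_{m+1}$, so every cocycle component landing in $J_{m+1}$ must take values in $\mathrm{span}\{e_{m+1}\}$, matching the list in Proposition \ref{prop53}. Once this identification is made, the induction closes and the theorem follows for all $m$ in the stated range.
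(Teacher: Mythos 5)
Your proposal is correct and follows essentially the same route as the paper: induction on $m$ with the base case $R_1$ taken from \cite{Adashev}, the splitting $R_{m+1}=R_m\oplus J_{m+1}$ combined with the general decomposition \eqref{eq2} and its $BL^2$-analogue, and Corollary \ref{cor2} to close the inductive step. The paper merely compresses this into the remark ``From Corollary \ref{cor2} we obtain the following result,'' so your writeup is just a more explicit assembly of the same ingredients.
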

\begin{rem} Since the cohomologically rigidness implies rigidness we get that the algebra $R_m$ is rigid for any values of $m, \ 1\leq m\leq n-k+1.$
\end{rem}

Now we are going to prove the triviality of $HL^2(R_n, R_n)$. We shall prove it also by induction.

Let us assume that $HL^2(R_{n-k+m},R_{n-k+m})=0$ for $1\leq m\leq k$. Then it is easy to see that $R_{n-k+m+1}=R_{n-k+m}\oplus J_{m+1}$ with ideal $J_{n-k+m+1}=<e_{n-k+m+1}>$ and $R_{n-k+m}\simeq R_{n-k+m+1}/J_{n-k+m+1}.$

From Theorem \ref{thm57}, we have the first step of the induction method ($m=1$), $HL^2(R_{n-k+1}, R_{n-k+1})=0.$

Applying the same arguments as above we need to check the equality $dim\overline{ZL^2(R_{n-k+m},R_{n-k+m})}=dim\overline{BL^2(R_{n-k+m},R_{n-k+m})}.$

The following results have been tested in a similar way to the results for the algebras $R_m.$

\begin{prop}  The following cochains:
$$\begin{array}{ll}
\varphi_i(e_i,f_1)=e_{n-k+m+1},& 1\leq i\leq n-k, \\[1mm]
\psi(e_1,y_{m+1})=e_{n-k+m+1}, &\\[1mm]
\phi_i(y_i,f_i)=e_{n-k+m+1}, & 1\leq i\leq k,\\[1mm]
\chi_i(y_i,y_{k+1})=e_{n-k+m+1}, & 1\leq i\leq k+1,\\[1mm]
\xi_i(e_1,f_i)=e_{n-k+m+1}, & 2\leq i\leq m+1.\\[1mm]
\end{array}$$

form a basis of spaces $\overline{ZL^2(R_{n-k+m}, R_{n-k+m})}$ and
$\overline{BL^2(R_{n-k+m}, R_{n-k+m})}$ with $1\leq m\leq k-1$. 
\end{prop}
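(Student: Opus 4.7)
The plan is to follow the template established by Proposition~\ref{prop53} and Corollary~\ref{cor2}, adapted to the richer bracket structure of $R_{n-k+m}$ for $1\le m\le k-1$. The two claims (a basis of $\overline{ZL^2(R_{n-k+m},R_{n-k+m})}$ and of $\overline{BL^2(R_{n-k+m},R_{n-k+m})}$) will be established in parallel, and their agreement then yields $HL^2(R_{n-k+m+1},R_{n-k+m+1})=0$ via the splitting principle \eqref{eq2}.

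First I would take an arbitrary $\varphi\in\overline{ZL^2(R_{n-k+m},R_{n-k+m})}$. Using the vector-space decomposition $R_{n-k+m+1}=R_{n-k+m}\oplus J_{n-k+m+1}$ with $J_{n-k+m+1}=\langle e_{n-k+m+1}\rangle$, one may assume that $\varphi$ is $J_{n-k+m+1}$-valued on pairs of basis vectors. I would then apply the cocycle identity $\Phi(\varphi)(x,y,z)=0$ to triples of basis elements, organising the computation around the five channels corresponding to the candidate cochains. The triples $(e_i,f_1,y_1)$ and $(e_i,f_1,y_{k+1})$ pin down the $\varphi_i$-direction by expressing $\varphi(e_i,f_1)$ in terms of $\varphi(e_1,f_1)$. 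The triples $(e_1,f_i,y_1)$, $(e_1,f_i,y_{k+1})$ and $(e_1,f_i,f_1)$ for $2\le i\le m+1$ isolate the new family $\xi_i$, exploiting the bracket $[e_1,f_i]=e_{n-k+i}$. Triples built from the $y_j$'s and $f_j$'s control $\phi_i$ and $\chi_i$, while the triple $(e_1,y_1,y_{m+1})$ produces $\psi$. This generates a system of linear relations having the same shape as \eqref{eq1111}, reducing the dimension of $\overline{ZL^2}$ to exactly the number of cochains listed; their linear independence is immediate from their distinguished supports.

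For the second claim, I would take $\varphi\in\overline{BL^2(R_{n-k+m},R_{n-k+m})}$ and write $\varphi(x,y)=[d(x),y]+[x,d(y)]-d([x,y])$ for some $d\in\Hom(R_{n-k+m+1},R_{n-k+m+1})$. Restricting to those $d$ with image in $J_{n-k+m+1}$ (the remaining part of $d$ being absorbed by $BL^2(R_{n-k+m},R_{n-k+m})$) and evaluating the resulting coboundary on the pairs $(e_i,f_1)$, $(e_1,y_{m+1})$, $(y_i,f_i)$, $(y_i,y_{k+1})$ and $(e_1,f_i)$ reproduces, up to nonzero scalars, each of the five families $\{\varphi_i,\psi,\phi_i,\chi_i,\xi_i\}$. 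A dimension count then forces $\overline{ZL^2(R_{n-k+m},R_{n-k+m})}=\overline{BL^2(R_{n-k+m},R_{n-k+m})}$, which supplies the inductive step.

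The main obstacle will be the cross-coupling introduced by the brackets $[e_1,f_i]=e_{n-k+i}$ for $2\le i\le m$: applying the cocycle identity to triples such as $(e_1,f_i,f_1)$ or $(e_1,f_i,f_j)$ mixes several of the candidate cochains simultaneously, so a careful book-keeping argument is needed to verify that exactly $m$ independent $\xi_i$ directions survive and that they are not conflated with the $\varphi_1$ or $\psi$ directions. Once this mixing is disentangled the coboundary computation is routine and the equality $\dim\overline{ZL^2}=\dim\overline{BL^2}$ closes the argument.
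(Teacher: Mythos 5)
Your proposal follows the same route as the paper: apply the cocycle identity to triples of basis elements to reduce every value of $\varphi\in\overline{ZL^2(R_{n-k+m},R_{n-k+m})}$ to the $J_{n-k+m+1}$-valued quantities $\varphi(e_i,f_1)$, $\varphi(e_1,f_i)$, $\varphi(e_1,y_{m+1})$, $\varphi(y_i,f_i)$, $\varphi(y_i,y_{k+1})$, then check that the coboundary formula imposes exactly the same relations on $\overline{BL^2}$, so the two spaces share the listed basis. This is essentially the paper's argument (which itself just transplants the proof of the earlier proposition for $R_m$, with the extra $\xi_i$ channel coming from the brackets $[e_1,f_i]=e_{n-k+i}$), so no further comparison is needed.
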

\begin{proof}
Similarly as in the proof of Proposition \ref{prop53} for any $\varphi \in \overline{ZL^2(R_{n-k+m}, R_{n-k+m})}\cup \overline{BL^2(R_{n-k+m}, R_{n-k+m})}$ we derive the following relations:
$$\left\{\begin{array}{ll}
\varphi(e_i,y_1)=(i-1)\varphi(e_{i-1},f_1),&2\leq i\leq n-k+1,\\[1mm]
\varphi(e_i,y_{m+1})=-\varphi(e_{i-1},f_1),&2\leq i\leq n-k+1,\\[1mm]
\varphi(e_{n-k+i},y_i)=\varphi(e_1,f_i),&2\leq i\leq m,\\[1mm]
\varphi(e_{n-k+i},y_{m+1})=-\varphi(e_1,f_i),&2\leq i\leq m,\\[1mm]
\varphi(f_i,y_i)=-\varphi(y_i,f_i),&1\leq i\leq k,\ i\neq m+1,\\[1mm]
\varphi(f_i,y_{m+1})=\varphi(y_i,f_i),&1\leq i\leq k,\ i\neq m+1,\\[1mm]
\varphi(f_i,y_{k+1})=\varphi(y_i,f_i),&1\leq i\leq k,\\[1mm]
\varphi(y_i,y_{m+1})=\varphi(y_i,y_{k+1}),&1\leq i\leq k+1\\[1mm]
\end{array}\right.$$
with $\{\varphi(e_i,f_1), \varphi(e_1,f_i), \varphi(e_1,y_{m+1}), \varphi(y_i,f_i), \varphi(y_i,y_{k+1})\}\subseteq J_{n-k+m+1}.$

\end{proof}

\begin{cor} \label{cor22} $\overline{ZL^2(R_{n-k+m}, R_{n-k+m})}=\overline{BL^2(R_{n-k+m}, R_{n-k+m})}, \ 1\leq m\leq k-1.$
\end{cor}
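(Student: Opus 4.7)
The plan is to deduce the corollary as an immediate consequence of the preceding proposition. That proposition asserts that the same explicit family $\{\varphi_i, \psi, \phi_i, \chi_i, \xi_i\}$ is simultaneously a basis of $\overline{ZL^2(R_{n-k+m}, R_{n-k+m})}$ and of $\overline{BL^2(R_{n-k+m}, R_{n-k+m})}$. Since two subspaces of the ambient space $ZL^2(R_{n-k+m+1}, R_{n-k+m+1})$ that share a basis must coincide, the stated equality follows at once. I would therefore record this one-line deduction and then observe that, together with the inductive hypothesis $HL^2(R_{n-k+m},R_{n-k+m})=0$ and the decompositions $ZL^2(R_{n-k+m+1},R_{n-k+m+1})=ZL^2(R_{n-k+m},R_{n-k+m})+\overline{ZL^2(R_{n-k+m},R_{n-k+m})}$ (and similarly for $BL^2$), this corollary delivers the induction step $HL^2(R_{n-k+m+1},R_{n-k+m+1})=0$.

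The substance of the argument actually lives inside the proposition itself, which runs in two parallel parts: first, every $\varphi\in\overline{ZL^2(R_{n-k+m},R_{n-k+m})}$ is shown, via the Leibniz 2-cocycle identity $\Phi(\varphi)=0$ evaluated on all triples of basis vectors of $R_{n-k+m+1}$, to be determined by its values at the argument pairs $(e_i,f_1)$, $(e_1,f_i)$, $(e_1,y_{m+1})$, $(y_i,f_i)$, $(y_i,y_{k+1})$, subject to the listed linear relations; second, the analogous constraints are verified for $\varphi=\partial d$ by direct expansion of $\partial d(x,y)=[d(x),y]+[x,d(y)]-d([x,y])$ using the multiplication of $R_{n-k+m+1}$ (in particular the only new bracket $[e_1,f_{m+1}]=e_{n-k+m+1}$). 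The parallelism of the two calculations is what forces the two spaces to have the same spanning set.

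The main obstacle, already handled inside the proposition, is the bookkeeping: the 2-cocycle identity couples values of $\varphi$ on many disparate pairs, and one must confirm that every such constraint holding for a general cocycle already holds automatically for a coboundary. Once that is secured, and once one checks that none of the listed basis elements collapses to zero in $\overline{BL^2}$ (impossible, since each takes the value $e_{n-k+m+1}\in J_{n-k+m+1}$, which is nonzero in the ambient space), one has $\dim\overline{ZL^2(R_{n-k+m},R_{n-k+m})}=\dim\overline{BL^2(R_{n-k+m},R_{n-k+m})}$; combined with the automatic inclusion $\overline{BL^2}\subseteq\overline{ZL^2}$ this yields the required equality for every $m$ in the range $1\leq m\leq k-1$.
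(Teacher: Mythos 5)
Your deduction matches the paper's: the corollary is an immediate consequence of the preceding proposition, which exhibits one and the same explicit family of cochains as a basis of both $\overline{ZL^2(R_{n-k+m}, R_{n-k+m})}$ and $\overline{BL^2(R_{n-k+m}, R_{n-k+m})}$, so the two subspaces coincide. The paper likewise offers no separate argument for the corollary, and your remarks on how it feeds the induction step for $HL^2(R_{n-k+m+1},R_{n-k+m+1})=0$ agree with the paper's strategy.
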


From Corollary \ref{cor22} and Theorem \ref{thm57} we obtain the main result of this section.

\begin{thm} The algebra $R_{s}$ is a cohomologically rigid algebra for any values of $s \ (1\leq s \leq n).$
\end{thm}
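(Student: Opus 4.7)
My plan is to assemble the final statement directly from the two preceding inductive constructions, so the proof will be quite short. The desired range $1\leq s\leq n$ splits naturally into $1\leq s\leq n-k+1$ and $n-k+2\leq s\leq n$, i.e.\ $s=n-k+m$ for $1\leq m\leq k$. The first range has already been handled by Theorem~\ref{thm57}, so nothing new is needed there. For the second range I would simply run the induction on $m$ that has been set up, using Corollary~\ref{cor22} as the inductive step.

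More precisely, the base case $m=1$ is $R_{n-k+1}$, which falls under Theorem~\ref{thm57} and is therefore cohomologically rigid. For the inductive step, assume $HL^2(R_{n-k+m},R_{n-k+m})=0$ for some $1\leq m\leq k-1$. Writing $R_{n-k+m+1}=R_{n-k+m}\oplus J_{n-k+m+1}$ with ideal $J_{n-k+m+1}=\langle e_{n-k+m+1}\rangle$, one has $R_{n-k+m}\cong R_{n-k+m+1}/J_{n-k+m+1}$. By the general decomposition (\ref{eq2}) applied to this extension,
\[
ZL^2(R_{n-k+m+1},R_{n-k+m+1})=ZL^2(R_{n-k+m},R_{n-k+m})+\overline{ZL^2(R_{n-k+m},R_{n-k+m})},
\]
and analogously for $BL^2$. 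The inductive hypothesis gives equality of the first summands modulo coboundaries, while Corollary~\ref{cor22} gives $\overline{ZL^2(R_{n-k+m},R_{n-k+m})}=\overline{BL^2(R_{n-k+m},R_{n-k+m})}$. Combining the two yields $HL^2(R_{n-k+m+1},R_{n-k+m+1})=0$, completing the induction.

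Finally, since $R_n=R_{n-k+k}$, the induction covers the full range $1\leq s\leq n$, and together with Theorem~\ref{thm57} we conclude that $R_s$ is cohomologically rigid for every $s$ in that range. The only delicate point, really, is making sure the decomposition (\ref{eq2}) and its $BL^2$ counterpart are applied to the correct quotient at each step and that the basis exhibited in the previous proposition actually realizes every class of $\overline{ZL^2}$ as a coboundary; both of these have effectively been established in Corollaries~\ref{cor2} and~\ref{cor22}, so no further computation is required. No serious obstacle is expected — the argument is purely the bookkeeping of two nested inductions that each rely on a preceding structural lemma.
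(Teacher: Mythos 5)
Your proof is correct and follows essentially the same route as the paper: the range $1\leq s\leq n-k+1$ is handled by Theorem~\ref{thm57}, and the range $s=n-k+m$ with $1\leq m\leq k$ by induction on $m$, with base case $R_{n-k+1}$ from Theorem~\ref{thm57} and inductive step given by the decomposition \eqref{eq2} together with Corollary~\ref{cor22}. No changes are needed.
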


\begin{rem} Since the cohomologically rigidness implies rigidness we get that the algebra $R_s$ is rigid for any values of $s, \ 1\leq s\leq n.$ Therefore, the algebra $R(\mu_3, k+2)$ is a rigid algebra.
\end{rem}

\section*{ Acknowledgements}

The work was partially supported by Ministerio de Econom\'{i}a y Competitividad (Spain), grant MTM2016-
79661-P (European FEDER support included, UE). The first named author was supported by Instituto de Matem\'{a}ticas de la Universidad de Sevilla  by a grant from the Simons Foundation.

\end{document}